\definecolor{LinkColor}{rgb}{0,0,1}
\definecolor{LinkColor2}{rgb}{0,0.5,0}
\definecolor{lbcolor}{rgb}{0.85,0.85,0.85}
\definecolor{FrameColor}{rgb}{0.85,0.85,0.85}
\definecolor{darkgreen}{rgb}{0,0.35,0}
\definecolor{purple}{rgb}{0.75,0,0.65}
\newcommand*\patchAmsMathEnvironmentForLineno[1]{%
	\expandafter\let\csname old#1\expandafter\endcsname\csname #1\endcsname
	\expandafter\let\csname oldend#1\expandafter\endcsname\csname end#1\endcsname
	\renewenvironment{#1}%
	{\linenomath\csname old#1\endcsname}%
	{\csname oldend#1\endcsname\endlinenomath}}%
\newcommand*\patchBothAmsMathEnvironmentsForLineno[1]{%
	\patchAmsMathEnvironmentForLineno{#1}%
	\patchAmsMathEnvironmentForLineno{#1*}}%
\numberwithin{equation}{section}
\newtheorem{theorem}{Theorem}[section]
\newtheorem{lemma}[theorem]{Lemma}
\newtheorem{proposition}[theorem]{Proposition}
\newtheorem{corollary}[theorem]{Corollary}
\newtheorem{definition}[theorem]{Definition}
\theoremstyle{definition}
\newtheorem{remark}[theorem]{Remark}
\renewenvironment{proof}[1][\proofname]{%
	\par\pushQED{\qed}\normalfont%
	\topsep6\p@\@plus6\p@\relax
	\trivlist\item[\hskip\labelsep\bfseries#1\@addpunct{.}]%
	\ignorespaces
}{%
	\popQED\endtrivlist\@endpefalse
}
\renewcommand\paragraph{\@startsection{paragraph}{4}{\z@}%
	{1ex \@plus1ex \@minus.2ex}%
	{-1em}%
	{\normalfont\normalsize\bfseries}}
\renewcommand\subparagraph{\@startsection{paragraph}{4}{\z@}%
	{1ex \@plus1ex \@minus.2ex}%
	{-1em}%
	{\normalfont\normalsize\itshape}}
\newcommand{\norm}[1]{\ensuremath\left\| #1 \right\|}
\newcommand{\abs}[1]{\ensuremath\left|#1 \right|}
\newcommand{\meano}[1]{\ensuremath\left< #1 \right>_\Omega}
\newcommand{\meang}[1]{\ensuremath\left< #1 \right>_\Gamma}
\newcommand{\inn}[2]{\ensuremath\left( #1 \hspace{1pt}{,}\hspace{1pt} #2 \right)}
\newcommand{\biginn}[2]{\ensuremath\big( #1 \hspace{1pt}{,}\hspace{1pt} #2 \big)}
\newcommand{\ang}[2]{\ensuremath\left< #1 \hspace{1pt}{,}\hspace{1pt} #2 \right>}
\newcommand{\bigang}[2]{\ensuremath\big\langle #1 \hspace{1pt}{,}\hspace{1pt} #2 \big\rangle}
\def\R{\mathbb R}
\def\N{\mathbb N}
\def\CC{\mathcal C}
\def\SS{\mathcal S}
\def\FF{\mathcal F}
\def\HH{\mathcal{H}}
\def\VV{\mathcal{V}}
\def\WW{\mathcal{W}}
\def\abso{|\Omega|}
\def\absg{|\Gamma|}
\def\A{{\alpha}}
\def\B{{\beta}}
\def\KA{{K,\alpha}}
\def\KAB{{K,\alpha,\beta}}
\def\KAA{{K,\alpha,\alpha}}
\def\KABS{{K,\alpha,\beta,\ast}}
\def\LB{{L,\beta}}
\def\LBA{{L,\beta,\alpha}}
\def\LBAS{{L,\beta,\alpha,\ast}}
\def\KALB{{K,\alpha,L,\beta}}
\def\mo{{-1}}
\def\n{\mathbf n}
\def\intO{\int_\Omega}
\def\intG{\int_\Gamma}
\def\dx{\;\mathrm dx}
\def\dS{\;\mathrm dS}
\def\delt{\partial_{t}}
\def\deln{\partial_\n}
\def\Grad{\nabla}
\def\Gradg{\nabla_\Gamma}
\def\Lap{\Delta}
\def\Lapg{\Delta_\Gamma}
\def\kernel{\textnormal{ker}}
\def\wto{\rightharpoonup}
\def\emb{\hookrightarrow}
\def\ov{\overline}
\newcommand{\suchthat}{\;\ifnum\currentgrouptype=16 \middle\fi|\;}
\begin{document}
	
\title{\bfseries 
	On second-order and fourth-order 
	elliptic \\
	systems consisting of bulk and surface PDEs: \\  
	Well-posedness, regularity theory and eigenvalue problems}

\author{Patrik Knopf \footnotemark[1] \and Chun Liu \footnotemark[2]}

\date{ }

\renewcommand{\thefootnote}{\fnsymbol{footnote}}
\footnotetext[1]{Department of Mathematics, University of Regensburg, 93053, Germany 
	\tt(\href{mailto:Patrik.Knopf@ur.de}{Patrik.Knopf@ur.de})}
\footnotetext[2]{Department of Applied Mathematics, Illinois Institute of Technology, Chicago, IL 60616, USA 
	\tt (\href{mailto:cliu124@iit.edu}{cliu124@iit.edu})}

\maketitle

%
%

\begin{center}
	\textit{This is a preprint version of the paper. Please cite as:} \\[1ex] 
	P.~Knopf and C.~Liu, \textit{Interfaces Free Bound.},  23:4, 507--533, 2021. \\
	\url{https://doi.org/10.4171/IFB/463}
\end{center}

\medskip

\begin{abstract}
	In this paper, we study second-order and fourth-order elliptic problems which include not only a Poisson equation in the bulk but also an inhomogeneous Laplace--Beltrami equation on the boundary of the domain. The bulk and the surface PDE are coupled by a boundary condition that is either of Dirichlet or Robin type. We point out that both the Dirichlet and the Robin type boundary condition can be handled simultaneously through our formalism without having to change the framework. Moreover, we investigate the eigenvalue problems associated with these second-order and fourth-order elliptic systems. We further discuss the relation between these elliptic problems and certain parabolic problems, especially the Allen--Cahn equation and the Cahn--Hilliard equation with dynamic boundary conditions.
	
	\bigskip
	\textit{Keywords:} Poisson equation, Laplace--Beltrami equation, bulk-surface coupling, Robin boundary condition, Dirichlet boundary condition, regularity theory, eigenvalue problem.
	
	\bigskip
	\textit{Mathematics Subject Classification:}
	35J57, 
	35J58, 
	35P05, 
	58J05, 
	58J50, 
\end{abstract}

\setlength\parindent{0ex}
\setlength\parskip{1ex}

\bigskip

%
%
\newpage

\section{Introduction}

In this paper, $\Omega$ denotes a bounded domain in $\R^d$ (with $d\in\N$, $d\ge 2$) whose boundary is denoted by $\Gamma:=\partial\Omega$ and is supposed to have at least Lipschitz regularity. Moreover, $\n$ denotes the outer unit normal vector field on $\Gamma$.

\paragraph{A second-order problem with bulk-surface coupling.}
We first consider the following second-order elliptic system consisting of a Poisson equation in the bulk and an inhomogeneous Laplace--Beltrami equation on the surface:
\begin{subequations}
	\label{I:SEC}
	\begin{alignat}{3}
	\label{I:SEC:1}
	-\omega \Lap u &= f &&\quad\text{in}\;\Omega, \\
	\label{I:SEC:2}
	-\gamma\Lapg v + \alpha\omega \deln u &= g &&\quad\text{on}\;\Gamma, \\
	\label{I:SEC:3}
	K\deln u &= \alpha v - u &&\quad\text{on}\;\Gamma.
	\end{alignat}
\end{subequations}
The pair $(f,g)$ stand for a generic pair of source terms whose exact properties will be specified in Section~3. Moreover, $\omega,\gamma>0$, $\alpha\in\R$ and $K\ge 0$ are given constants. 
If $\alpha\neq 0$, the equation \eqref{I:SEC:1} in the bulk (i.e., in $\Omega$) and the equation \eqref{I:SEC:2} on the surface (i.e., on $\Gamma$) are coupled through the boundary condition \eqref{I:SEC:3}. 
In the degenerate case $\alpha=0$, the subproblems (\eqref{I:SEC:1},\eqref{I:SEC:3}) and \eqref{I:SEC:2} are completely decoupled.
If $K>0$, \eqref{I:SEC:3} can be regarded as a Robin type boundary condition, which is sometimes also referred to as a Fourier type boundary condition. (It is worth mentioning that from a historical point of view, the term Fourier boundary condition would be more precise as it seems that Robin never used this type of boundary condition himself. We refer to \cite{Gustafson,Gustafson2} for a detailed discussion of this issue.) 
In the case $K=0$, this boundary condition is to be interpreted as the Dirichlet type boundary condition
\begin{align*}
	 u\vert_\Gamma = \alpha v \quad\text{on}\;\Gamma.
\end{align*}
By our approach, both cases $K>0$ and $K=0$ can be handled simultaneously.

For simplicity of the notation and to provide a cleaner presentation, we will set the constants $\omega$ and $\gamma$ to one in the analysis. We will see that the choice $\omega=1$ does not even mean any loss of generality due to a rescaling argument. We establish the existence and uniqueness of weak solutions to \eqref{I:SEC} provided that the source terms belong to suitable spaces. Moreover, we develop a regularity theory for such solutions depending on the regularity of the domain and the source terms.

\paragraph{A second-order eigenvalue problem.}
Associated with \eqref{I:SEC} is the following eigenvalue problem:
\begin{subequations}
	\label{I:SEIG}
	\begin{alignat}{3}
	\label{I:SEIG:1}
	-\omega\Lap u &= \lambda u &&\quad\text{in}\;\Omega, \\
	\label{I:SEIG:2}
	-\gamma\Lapg v + \alpha\omega \deln u &= \lambda v &&\quad\text{on}\;\Gamma, \\
	\label{I:SEIG:3}
	K\deln u &= \alpha v - u &&\quad\text{on}\;\Gamma.
	\end{alignat}
\end{subequations}
It can formally be regarded as a generalization of the
\textit{Wentzell eigenvalue problem}
\begin{subequations}
	\label{WENZ}
	\begin{alignat}{3}
	\label{WENZ:1}
	-\Lap u &= 0 &&\quad\text{in}\;\Omega, \\
	\label{WENZ:2}
	- \gamma\Lapg u + \deln u &= \lambda u &&\quad\text{on}\;\Gamma,
	\end{alignat}
\end{subequations}
or the \textit{Steklov eigenvalue problem}
\begin{subequations}
	\label{STEK}
	\begin{alignat}{3}
	\label{STEK:1}
	-\Lap u &= 0 &&\quad\text{in}\;\Omega, \\
	\label{STEK:2}
	\deln u &= \lambda u &&\quad\text{on}\;\Gamma.
	\end{alignat}
\end{subequations}
In contrast to classical eigenvalue problems, the eigenvalue does not appear in the equation itself but in the boundary condition instead.
After its introduction in \cite{Steklov}, the Steklov eigenvalue problem has already been extensively investigated in the literature from many different perspectives. We refer the reader to  \cite{Banuelos,Belgacem,Brock,Colbois,Escobar,Fraser,Giles,Girouard,Mora} to name but a few. 
There are also several works on the Wentzell eigenvalue problem of which we want to mention \cite{Kennedy,Dambrine,Xia,Du}.

To understand the connection of our system \eqref{I:SEC} to the Wentzell problem and the Steklov problem, we choose $K=0$ and $\alpha=\omega^{-1/2}$ for any $\omega>0$. In particular this means that
$\omega^{1/2}\, u\vert_\Gamma =  v$ on $\Gamma$ due to \eqref{I:SEC:3}.
Multiplying \eqref{I:SEIG:1} by $\omega^{-1}$ and \eqref{I:SEIG:2} by $\omega^{-1/2}$ then yields
\begin{subequations}
	\label{LIM}
\begin{alignat}{3}
	-\Lap u &= \lambda\omega^{-1} u &&\quad\text{in}\;\Omega, \\
	-\gamma\Lapg u + \deln u &= \lambda u &&\quad\text{on}\;\Gamma.
\end{alignat}
\end{subequations}
Now, by formally passing to the limit $\omega\to\infty$, we obtain the Wentzell problem~\eqref{WENZ} as the limit system. Choosing first $\gamma = \omega^\mo$ in \eqref{LIM} and passing to the formal limit $\omega\to\infty$ afterwards, we arrive at the Steklov problem~\eqref{STEK}. 

For the analysis of the eigenvalue problem \eqref{I:SEIG} we will set the constants $\omega$ and $\gamma$ to one again. We prove that there exists a positive unbounded sequence of eigenvalues whose corresponding eigenfunctions form an orthonormal basis of a suitable linear subspace of $L^2(\Omega)\times L^2(\Gamma)$. Moreover, we conclude regularity properties for the eigenfunctions and we show that the eigenvalues can be characterized by a variational minimax principle.

\paragraph{A fourth-order problem with bulk-surface coupling.} 
We next investigate the following fourth-order elliptic problem with bulk-surface coupling:
\begin{subequations}
	\label{I:GEN}
	\begin{alignat}{3}
	\label{I:GEN:1}
	\Lap^2 \phi &= f 
	&&\quad\text{in}\;\Omega, \\
	\label{I:GEN:2}
	\Lapg^2 \psi - \alpha \Lapg\deln \phi - \beta \deln\Lap\phi &= g 
	&&\quad\text{on}\;\Gamma, \\
	\label{I:GEN:3}
	K\, \deln \phi &= \alpha\psi - \phi &&\quad\text{on}\;\Gamma,\\
	\label{I:GEN:4}
	L\, \deln \Lap \phi &= \beta\Lapg \psi - \Lap \phi - \alpha\beta \deln \phi 
	&&\quad\text{on}\;\Gamma.
	\end{alignat}
\end{subequations}
Here, $K,L\ge 0$ and $\alpha,\beta\in\R$ are given constants, and $(f,g)$ denotes a pair of generic source terms whose properties will be specified in Section~5. We further suppose that $\A$ and $\B$ satisfy 
\begin{align*}
	\A\B\abso + \absg \neq 0,
\end{align*}
which will be crucial for the analysis. We will see that the fourth-order system \eqref{I:GEN} can be decoupled into two second-order systems which are both of the type \eqref{I:SEC}: 
\begin{subequations}
	\label{I:ALT}
	\begin{alignat}{3}
	\label{I:ALT:1}
	-\Lap \phi &= \mu  &&\quad\text{in}\;\Omega, \\
	\label{I:ALT:2}
	-\Lapg \psi + \alpha \deln \phi &= \nu &&\quad\text{on}\;\Gamma, \\
	\label{I:ALT:3}
	K\, \deln \phi &= \alpha \psi-\phi &&\quad\text{on}\;\Gamma,\\[1ex]
	\label{I:ALT:4}
	-\Lap \mu &= f &&\quad\text{in}\;\Omega, \\
	\label{I:ALT:5}
	-\Lapg \nu + \beta \deln \mu&= g &&\quad\text{on}\;\Gamma, \\
	\label{I:ALT:6}
	L\, \deln \mu &= \beta \nu-\mu &&\quad\text{on}\;\Gamma.
	\end{alignat}
\end{subequations}
For that reason, the theory developed for the problem \eqref{I:SEC} can be used to establish weak well-posedness and higher regularity for the system \eqref{I:GEN}.

\paragraph{A fourth-order eigenvalue problem.} Inspired by the Steklov eigenvalue problem, also fourth-order eigenvalue problems, in which the eigenvalue appears in the boundary condition, have been extensively investigated in the literature. We refer the reader to \cite{Berchio,Bucur,Bucur-Gazzola,Buoso,Ferrero-Gazzola-Weth,Gazzola-Sweers,Liu,Matevossian} just to mention a few of them. Because of their relation to the Steklov problem, these models are sometimes referred to as \textit{biharmonic Steklov eigenvalue problems}.

In this paper, we study the following eigenvalue problem: 
\begin{subequations}
	\label{I:EIG}
	\begin{alignat}{3}
	\label{I:EIG:1}
	\Lap^2 \phi &= \lambda \phi 
	&&\quad\text{in}\;\Omega, \\
	\label{I:EIG:2}
	\Lapg^2 \psi - \alpha\Lapg\deln \phi - \beta\deln\Lap\phi &= \lambda \psi 
	&&\quad\text{on}\;\Gamma,\\
	\label{I:EIG:3}
	K\, \deln \phi &= \alpha\psi - \phi, 
	&&\quad\text{on}\;\Gamma,\\
	\label{I:EIG:4}
	L\, \deln \Lap \phi &= \beta\Lapg \psi - \Lap \phi - \alpha\beta\deln \phi 
	&&\quad\text{on}\;\Gamma.
	\end{alignat}
\end{subequations}
As stated above, $K,L\ge 0$ and $\alpha,\beta\in\R$ are given constants with $\A\B\abso + \absg \neq 0$.

The novelty of this eigenvalue problem is that it comprises not only a boundary condition but a fourth-order elliptic equation on the surface. It can thus be regarded as a \textit{bulk-surface biharmonic eigenvalue problem}.
In contrast to the fourth-order Steklov type problems mentioned above, the eigenvalue appears both in the Poisson equation \eqref{I:EIG:1} in the bulk and in the Laplace--Beltrami equation \eqref{I:EIG:2} on the surface but not in the coupling conditions \eqref{I:EIG:3} and \eqref{I:EIG:4}.

As in the second-order case, we prove the existence of a positive unbounded sequence of eigenvalues whose associated eigenfunctions form an orthonormal basis of a suitable linear subspace of $(H^1(\Omega))^*\times (H^1(\Gamma))^*$ (with the asterisk indicating the dual space). We further establish regularity properties for the eigenfunctions and we show that the eigenvalues can be characterized by a variational minimax principle.

\paragraph{Relation to elliptic and parabolic problems with dynamic boundary conditions.} 
We further want to mention that the problems studied in this paper are not only interesting from the perspective of pure analysis but can also be used in the treatment of parabolic problems (especially phase-field models) with dynamic boundary conditions.

The second order problem \eqref{I:SEC} is closely related to the Allen--Cahn equation subject to a dynamic boundary condition that is also of Allen--Cahn type:
\begin{subequations}
	\label{I:AC}
	\begin{alignat}{3}
	\label{I:AC:1}
	\delt u -\Lap u &= F'(u) &&\quad\text{in}\;\Omega \times (0,T), \\
	\label{I:AC:2}
	\delt v -\Lapg v + \alpha\deln u &= G'(u) &&\quad\text{on}\;\Gamma \times (0,T), \\
	\label{I:AC:3}
	K\deln u &= \alpha v - u &&\quad\text{on}\;\Gamma \times (0,T), \\
	(u,v)\vert_{t=0} &= (u_0,v_0) &&\quad\text{on}\;\Omega\times\Gamma.
	\end{alignat}
\end{subequations}
In this phase-field model, $u=u(x,t)$ and $v=v(x,t)$ (the so-called phase-field variables) describe the difference in volume fractions of two different materials located in the bulk $\Omega$ and on the surface $\Gamma$, respectively. This means that the functions $u$ and $v$ are expected to attain values close to $1$ or $-1$ in the regions where only one of the materials is present.
To describe phase separation processes, the bulk potential $F$ and the surface potential $G$ usually exhibit a double-well structure with minima at $\pm1$ and a local maximum at $0$. 

In the Dirichlet case ($K=0$), the problem was investigated, for instance, in \cite{Sprekels, Calatroni, Colli-Fukao}. The Robin case ($K>0$) was studied in \cite{Colli-Fukao-Lam,Lam-Wu}. We further refer to \cite{Gal-Grasselli} where a problem similar to \eqref{I:AC} was discussed.
 
In the analysis of models like \eqref{I:AC} a deeper understanding of the elliptic system \eqref{I:SEC} is very beneficial. Although different strategies have been used in the literature to prove well-posedness, the analysis of the second order eigenvalue problem offers a new possibility to approach systems of the type \eqref{I:AC}. Namely, the orthonormal basis of eigenfunctions to the problem \eqref{I:SEIG} can be used to approximate equations like \eqref{I:AC} by means of a Faedo--Galerkin scheme.   

We also want to mention some further works on second order elliptic or parabolic problems subject to dynamic boundary conditions that are related to the elliptic problem \eqref{I:SEC}. In \cite{Vazquez}, the Laplace equation with dynamic boundary conditions of reaction-diffusion type was studied, and in \cite{Meyries}, nonlinear problems with parabolic dynamic boundary conditions were investigated. An overview about certain classes of elliptic and parabolic problems with dynamic boundary conditions of Wentzell type is given in \cite{Gal-review}.

Similar to the second-order case, the fourth-order elliptic problem \eqref{I:GEN} (or its decoupled equivalent \eqref{I:ALT}) is closely related to the Cahn--Hilliard equation subject to a dynamic boundary condition that also exhibits a Cahn--Hilliard structure:
\begin{subequations}
	\label{I:CH}
	\begin{alignat}{3}
	\label{I:CH:1}
	F'(\phi)-\Lap \phi &= \mu  &&\quad\text{in}\;\Omega\times (0,T), \\
	\label{I:CH:2}
	G'(\psi)-\Lapg \psi + \alpha \deln \phi &= \nu &&\quad\text{on}\;\Gamma\times (0,T), \\
	\label{I:CH:3}
	K\, \deln \phi &= \alpha \psi-\phi &&\quad\text{on}\;\Gamma\times (0,T),\\[1ex]
	\label{I:CH:4}
	\delt \phi -\Lap \mu &= 0 &&\quad\text{in}\;\Omega\times (0,T), \\
	\label{I:CH:5}
	\delt \psi -\Lapg \nu + \beta \deln \mu&= 0 &&\quad\text{on}\;\Gamma\times (0,T), \\
	\label{I:CH:6}
	L\, \deln \mu &= \beta \nu-\mu &&\quad\text{on}\;\Gamma\times (0,T),\\[1ex]
	\label{I:CH:6}
	(\phi,\psi)\vert_{t=0} & = (\phi_0,\psi_0) &&\quad\text{on}\;\Omega\times\Gamma.
	\end{alignat}
\end{subequations}
As in the Allen-Cahn equation \eqref{I:AC}, the functions $\phi=\phi(x,t)$ and $\psi=\psi(x,t)$ denote phase-field variables, and $F$ and $G$ denote the bulk and the surface potential, respectively. Usually both $F$ and $G$ exhibit a double-well structure as described above.
Moreover, $\mu=\mu(x,t)$ stands for the chemical potential in the bulk whereas $\nu=\nu(x,t)$ denotes the chemical potential on the surface. 

The system \eqref{I:CH} with $K=L=0$ was introduced and analyzed in \cite{Gal,GMS}. In \cite{LW}, the model \eqref{I:CH} with $K=0$ and $L=\infty$ (meaning $\deln\mu = 0$ on $\Gamma\times(0,T)$) was derived by an energetic variational approach. This system was further generalized in \cite{KL} by also allowing $K>0$. The asymptotic limit $K\to 0$ was also studied in \cite{KL}. The case $K=0$ and $0<L<\infty$ and its asymptotic limits $L\to 0$ and $L\to\infty$ were investigated in \cite{KLLM}. A similar nonlocal Cahn--Hilliard model was proposed and analyzed in \cite{KS}.

In the analysis of these models the second-order elliptic problem \eqref{I:SEC} plays a crucial role. For instance in \cite{GK,KL,KLLM}, where well-posedness of \eqref{I:CH} was established based on a gradient-flow approach, the system \eqref{I:SEC} was essential to define the underlying inner product. However, we point out that the cases $K=0$ (or $L=0$) and $K>0$ (or $L>0$) always had to be handled separately, whereas in this paper we establish a formalism to approach all these cases simultaneously. We are further convinced that the orthonormal basis of eigenfunctions to the second-order problem \eqref{I:SEIG} or the fourth-order problem \eqref{I:EIG} could potentially be used to discretize the system \eqref{I:CH} by a Faedo--Galerkin scheme, which would provide a new approach to tackle such problems.

\section{Notation and preliminaries}

In this Section we introduce some notation and preliminaries that will be used throughout this paper. 

\begin{enumerate}[label = $\mathrm{(P\arabic*)}$, ref = $\mathrm{P\arabic*}$, leftmargin = *]
\item In this paper, $\N$ denotes the set of natural numbers excluding zero, and $\N_0 = \N\cup\{0\}$.
In general, $\Omega$ will denote a bounded domain in $\R^d$ for some $d\in\N$ with $d\ge 2$ whose boundary $\Gamma:=\partial\Omega$ has at least Lipschitz regularity. The case $d=1$ is excluded as the Laplace--Beltrami operator does not make sense on a boundary consisting only of single points. 
\item \label{P:SPACE}
For any Banach space $X$, its norm will be denoted by $\|\cdot\|_X$ and its dual space is denoted by $X^*$. For any $\varphi\in X^*$ and $\zeta\in X$, we write
$\ang{\varphi}{\zeta}_{X}$ to denote their dual pairing. If $X$ is a Hilbert space, its inner product is denoted by $(\cdot,\cdot)_X$.
\item For any $1\le p\le \infty$, $L^p(\Omega)$ and
$L^p(\Gamma)$ stand for the Lebesgue spaces that are equipped with
the standard norms $\|\cdot\|_{L^p(\Omega)}$ and
$\|\cdot\|_{L^p(\Gamma)}$. For $s\ge 0$ and $1\le p\le \infty$, the
symbols $W^{s,p}(\Omega)$ and $W^{s,p}(\Gamma)$ denote the Sobolev
spaces with corresponding norms $\|\cdot\|_{W^{s,p}(\Omega)}$ and
$\|\cdot\|_{W^{s,p}(\Gamma)}$. Note that $W^{0,p}$ can be identified
with $L^p$. All Lebesgue spaces and Sobolev spaces are Banach spaces
and if $p=2$, they are even Hilbert spaces. In this case we will
write $H^s(\Omega)=W^{s,2}(\Omega)$ and
$H^s(\Gamma)=W^{s,2}(\Gamma)$. 

\item Let $C^\infty(\ov\Omega)$ and $C^\infty(\Gamma)$ denote the spaces of smooth functions on $\ov\Omega$ or $\Gamma$, respectively.
For brevity, we will use the notation
\begin{align*}
\CC^\infty&:=C^\infty(\ov\Omega)\times C^\infty(\Gamma).
\end{align*}


\item For any functions $\zeta\in H^1(\Omega)^*$ and $\xi\in H^1(\Gamma)^*$, we define their generalized mean by the duality pairings
\begin{align*}
\meano{\zeta} := \ang{\zeta}{1}_{H^1(\Omega)^*},
\qquad
\meang{\xi} := \ang{\xi}{1}_{H^1(\Gamma)^*}.
\end{align*}
If additionally $\zeta\in L^1(\Omega)$ or $\xi\in L^1(\Gamma)$, the mean can be expressed as
\begin{align*}
	\meano{\zeta} := \frac{1}{\abs{\Omega}} \intO \zeta \dx,
	\qquad
	\meang{\xi} := \frac{1}{\abs{\Gamma}} \intG \xi \dS,
\end{align*}
respectively.

\item For any integer $k\in\N_0$, we introduce the space
\begin{align*}
	\HH^k &:= H^k(\Omega)\times H^k(\Gamma)
\end{align*}
which is endowed with the standard inner product
\begin{align*}
	\biginn{(u,v)}{(\zeta,\xi)}_{\HH^k} 
		:= \biginn{u}{\zeta}_{H^k(\Omega)}
			+ \biginn{v}{\xi}_{H^k(\Gamma)},
	\quad (u,v),(\zeta,\xi) \in \HH^k
\end{align*}
and the induced norm
\begin{align*}
	\norm{(u,v)}_{\HH^k} 
		:= \biginn{(u,v)}{(u,v)}^{1/2}_{\HH^k},
	\quad (u,v) \in \HH^k.
\end{align*}
This means that 
$
	\big(\HH^k,\inn{\cdot}{\cdot}_{\HH^k},\norm{\,\cdot\,}_{\HH^k}\big)
$
is a Hilbert space. 

\item For any $k\in\N_0$, $m\in \N$ and $K\ge 0$, we define the closed linear subspaces
\begin{align*}
\HH_\KA^m &:=\left\{
\begin{aligned}
& \HH^m , &&\text{if}\; K>0,\\
&\big\{ (u,v)\in\HH^m \suchthat 
u\vert_\Gamma = \alpha v \;\;\text{a.e. on}\; \Gamma  
\big\}, &&\text{if}\; K=0.
\end{aligned}
\right.
\\[2ex]
\VV_\B^k &:=\big\{ (u,v)\in \HH^m \suchthat \beta\abs{\Omega}\meano{u} + \abs{\Gamma}\meang{v} = 0 \big\},
\\[2ex]
\WW_{K,\alpha,\beta}^m &:= \HH_\KA^m \cap \VV_\B^m.
\end{align*}
Note that these subspaces are Hilbert spaces with respect to the inner product $\inn{\cdot}{\cdot}_{\HH^r}$ and its induced norm $\norm{\,\cdot\,}_{\HH^r}$ for $r=k$ or $r=m$, respectively.

\item \label{P:INNKA}
Let $K\ge 0$ and $\A\in\R$ be any real numbers. We set
\begin{align*}
	\sigma(K) :=
	\begin{cases}
		K^\mo, &\text{if}\; K>0,\\
		0, &\text{if}\; K=0,
	\end{cases}
\end{align*}
and we define a bilinear form on $\HH^1\times\HH^1$ by
\begin{align*}
\begin{aligned}
	\biginn{(\phi,\psi)}{(\zeta,\xi)}_\KA 
	&:= \intO \Grad\phi\cdot\Grad\zeta \dx
	+ \intG \Gradg\psi\cdot\Gradg\xi \dS \\
	&\qquad + \sigma(K) \intG (\A\psi-\phi)(\A\xi-\zeta) \dS,
\end{aligned}
\end{align*}
for all $(\phi,\psi),(\zeta,\xi) \in \HH^1$.
Moreover, we set
\begin{align*}
	\norm{(\phi,\psi)}_\KA 
		:= \biginn{(\phi,\psi)}{(\phi,\psi)}_\KA^{1/2}.
\end{align*}

Now, let $\B\in\R$ with $\A\B\abso+\absg\neq 0$ be arbitrary.
Then the bilinear form $\inn{\cdot}{\cdot}_\KA$ defines an inner product on $\WW_\KAB^1\,$, and $\norm{\,\cdot\,}_\KA$ defines a norm on $\WW_\KAB^1$ that is equivalent to the norm $\norm{\,\cdot\,}_{\HH^1}$ (see Corollary~\ref{COR:EQU} in the appendix). 

The space 
$$
	\big(\WW_\KAB^1,\inn{\cdot}{\cdot}_\KA,\norm{\,\cdot\,}_\KA\big) 
$$	
is a Hilbert space. Unless stated otherwise, we understand the space $\WW_\KAB^1$ to be standardly endowed with the inner product $\inn{\cdot}{\cdot}_\KA$ and the norm $\norm{\,\cdot\,}_\KA$.
\item \label{P:DUAL} 
For any $\beta\in\R$, we define the space
\begin{align*}
	\VV^\mo_\B := \big\{ (u,v) \in (\HH^1)^* \suchthat \beta\abs{\Omega}\meano{u} + \abs{\Gamma}\meang{v} = 0 \big\}.
\end{align*}
This entails the chain of inclusions
\begin{align*}
	\WW_\KAB^1 \subset \VV^1_\B \subset \VV^\mo_\B \subset (\HH^1)^* \subseteq (\HH_\KA^1)^* 
\end{align*}
for all $K\ge 0$ and $\alpha, \beta\in\R$. 

\end{enumerate}

\medskip

\section{Second-order elliptic problems with bulk-surface \\ coupling of Robin or Dirichlet type} \label{SECT:3}

In this section, we want to investigate the second-order elliptic system \eqref{I:SEC}. For simplicity of the notation and to provide a cleaner presentation, we set $\omega=\gamma=1$. The system \eqref{I:SEC} is thus restated as:
\begin{subequations}
\label{SEC}
	\begin{alignat}{3}
	\label{SEC:1}
	-\Lap u &= f &&\quad\text{in}\;\Omega, \\
	\label{SEC:2}
	-\Lapg v + \alpha \deln u &= g &&\quad\text{on}\;\Gamma, \\
	\label{SEC:3}
	K\deln u &= \alpha v - u &&\quad\text{on}\;\Gamma,
	\end{alignat}
\end{subequations}
where $\alpha\in\R$ and $K\ge 0$ are given constants. 

In fact, the choice $\omega=1$ means no loss of generality due to the following rescaling argument:
Let $\alpha\in\R$, $\omega,\gamma>0$ and $K\ge 0$ be arbitrary and let $(u,v)$ be any solution to the system \eqref{I:SEC}. 
It is then straightforward to check that $(\tilde u,\tilde v) := \omega (u,v)$ is a solution to the system \eqref{I:SEC} with  $\omega$ and $\gamma$ being replaced by $\tilde\omega:=1$ and $\tilde\gamma:=\gamma\omega^{-1}$, respectively. Hence, if the solution $(u,v)$ is known, the solution $(\tilde u,\tilde v)$ can directly be recovered. 

Although it can not be justified by rescaling, we confine ourselves to investigate the problem for $\gamma=1$. We point out that the case $\gamma\neq 1$ can be handled by the same analytical methods. That is, in the case $\gamma\neq 1$, the definition of the inner product $\inn{\cdot}{\cdot}_{K,\alpha}$ would have to be modified slightly (see Remark \ref{REM:WF:SEC}(d)).

As already pointed out in the introduction, for $K>0$, the coupling equation \eqref{SEC:3} can be regarded as the \textit{Robin type boundary condition} 
\begin{align}
	\deln u\vert_\Gamma = \frac 1 K \big(\alpha v - u\big) \quad \text{a.e. on $\Gamma$}.
\end{align}
For $K=0$, \eqref{SEC:3} is to be interpreted as the \textit{Dirichlet type boundary condition} 
\begin{align}
	u\vert_\Gamma = \alpha v \quad \text{a.e. on $\Gamma$}.
\end{align}
Our approach allows to handle both cases simultaneously.

We first consider the system \eqref{SEC} formally and we assume that the functions $u$, $v$, $f$ and $g$ are sufficiently regular. After testing \eqref{SEC:1} and \eqref{SEC:2} with test functions $\zeta$ and $\xi$, respectively, integration by parts leads to the equation
\begin{align}
\label{WF:SEC:F}
\begin{aligned}
&\intO \Grad u \cdot \Grad \zeta \dx
+ \intG \Gradg v \cdot \Gradg \xi \dS
+ \intG \deln u (\alpha\xi-\zeta) \dS\\[1ex]
&\quad = \intO f\zeta \dx + \intG g\xi \dS.
\end{aligned}
\end{align}
Invoking the boundary condition \eqref{SEC:3}, we find that
\begin{align*}
\intG \deln u (\alpha\xi-\zeta) \dS = \sigma(K) \intG (\alpha v-u)(\alpha\xi-\zeta) \dS.
\end{align*}
Hence, in view of \eqref{P:INNKA}, the equation \eqref{WF:SEC:F} can be expressed as
\begin{align*}
\biginn{(u,v)}{(\zeta,\xi)}_\KA = \biginn{(f,g)}{(\zeta,\xi)}_{\HH^0} = \bigang{(f,g)}{(\zeta,\xi)}_{\HH^1_\KA}.
\end{align*}
This motivates the following definition.

\begin{definition} \label{DEF:WS:SEC}
	Let $K\ge 0$ and $\alpha\in\R$ be arbitrary, let $\Omega\subset \R^d$ be a bounded Lipschitz domain 
	and let $(f,g)\in \VV^\mo_\A$ be arbitrary. 
	
	Then a pair $(u,v)\in\HH^1_\KA$ is called a weak solution of the system \eqref{SEC} if the weak formulation
	\begin{align}
	\label{WF:SEC}
	\biginn{(u,v)}{(\zeta,\xi)}_\KA = \bigang{(f,g)}{(\zeta,\xi)}_{\HH^1_\KA}	
	\end{align}
	is satisfied for all test functions $(\zeta,\xi)\in \HH^1_\KA$. 
\end{definition}

\begin{remark} 
	\label{REM:WF:SEC}
	\begin{enumerate}[label = $\mathrm{(\alph*)}$, leftmargin = *]
		\item Suppose that the functions $(u,v)\in\HH^1_\KA$ and $(f,g)\in\VV^\mo_\A$ satisfy the weak formulation \eqref{WF:SEC}. Choosing the test functions $(\zeta,\xi) = (\alpha,1)$ in \eqref{WF:SEC}, we obtain the \textit{compatibility condition}
		\begin{align}
		\label{SEC:COMP}
		\alpha \abs{\Omega}\meano{f} + \abs{\Gamma}\meang{g} = 0.
		\end{align}
		For that reason, this constraint is incorporated in the space of admissible source terms $\VV^\mo_\A$.
		Moreover, in the case $K>0$, we may choose $(\zeta,\xi)=(1,0)$ and $(\zeta,\xi)=(0,1)$. This leads to 
		\begin{align}
		\label{SEC:ID}
		- \abs{\Omega}\meano{f} 
		= \frac 1K \intG (\alpha v- u) \dS,\quad 
		\abs{\Gamma}\meang{g}
		= \alpha\, \frac 1K \intG (\alpha v- u)\dS
		\end{align}
		if $K>0$.
		\item Suppose that $(u,v)\in \HH^1_\KA$ is a weak solution of the system \eqref{SEC} to the source terms $(f,g)\in\VV^\mo_\A$. One can easily see that then the pair 
		$$(u+\alpha c,v+c)\in\HH^1_\KA$$
		is also a weak solution to the source terms $(f,g)$ for any constant $c\in\R$. 
		Hence, in order to discuss unique weak solutions, this constant $c$ needs to be fixed. This can be done, for instance, by demanding that $(u,v)\in\WW_\KAB^1$ for some suitable $\beta\in\R$.
		\item We want to mention that a second-order elliptic equation similar to ours has been investigated in \cite{Elliot-Ranner}. The system studied there reads as follows:
		\begin{subequations}
			\label{ELRA}
			\begin{alignat}{3}
			\label{ELRA:1}
			-\Lap u + u &= f &&\quad\text{in}\;\Omega, \\
			\label{ELRA:2}
			-\Lapg v + v + \deln u &= g &&\quad\text{on}\;\Gamma, \\
			\label{ELRA:3}
			\deln u &= \beta v - \alpha u &&\quad\text{on}\;\Gamma,
			\end{alignat}
		\end{subequations}
		where $\alpha$ and $\beta$ are positive constants. Although we will use similar techniques to tackle the problem \eqref{SEC}, it is not possible to just resort to the results established in \cite{Elliot-Ranner}. 
		For instance, because of the additional terms ``$+u$'' in \eqref{ELRA:1} and ``$+v$'' in \eqref{ELRA:2}, there is no compatibility condition (such as \eqref{SEC:COMP} for our model). 
		\item To investigate the system \eqref{I:SEC} with $\gamma\neq 1$, the inner product $\inn{\cdot}{\cdot}_{K,\alpha}$ would have to be replaced by
		\begin{align*}
		\begin{aligned}
		\biginn{(\phi,\psi)}{(\zeta,\xi)}_{K,\A,\gamma} 
		&:= \intO \Grad\phi\cdot\Grad\zeta \dx
		+ \gamma \intG \Gradg\psi\cdot\Gradg\xi \dS \\
		&\qquad + \sigma(K) \intG (\A\psi-\phi)(\A\xi-\zeta) \dS,
		\end{aligned}
		\end{align*}
		for all $(\phi,\psi),(\zeta,\xi) \in \HH^1$. However, as $\gamma$ is just a positive constant, this modification would not have any crucial impact on the analysis. Thus, the choice $\gamma=1$ is not a real loss of generality. 	
	\end{enumerate}
\end{remark}

We now intend to establish existence and uniqueness as well as regularity results for weak solutions of the system \eqref{SEC}. In view of Remark~\ref{REM:WF:SEC}(b), we require that the weak solution belongs to $\WW^1_\KAB$ for any given $\beta\in\R$ with $\A\B\abso+\absg\neq 0$. This is stated by the following theorem.

\begin{theorem}
	\label{THM:SEC}
	Let $K\ge 0$ and $\alpha\in\R$ be arbitrary and let $\Omega\subset \R^d$  be a bounded Lipschitz domain. Then the following holds:
	\begin{enumerate}[label = $\mathrm{(\alph*)}$, leftmargin = *]
		\item For any $\B\in\R$ with $\A\B\abso+\absg\neq 0$ and any pair of source terms $(f,g)\in\VV^\mo_\A$, there exists a unique weak solution $(u_{(f,g)},v_{(f,g)})\in\WW_\KAB^1$ of the system \eqref{SEC}. 
		
		This means, we can define a solution operator
		\begin{align}
		\label{DEF:S}
		\begin{aligned}
			&\SS_\KAB = \big(\SS_\KAB^\Omega,\SS_\KAB^\Gamma\big):\VV^\mo_\A \to \WW_\KAB^1 \subset \VV^\mo_\B, \\ 
			&\SS_\KAB(f,g) := (u_{(f,g)},v_{(f,g)})
		\end{aligned}
		\end{align}
		mapping any pair of source terms $(f,g)\in \VV^\mo_\A$ onto the corresponding weak solution $(u_{(f,g)},v_{(f,g)})\in \WW_\KAB^1$ of the system \eqref{SEC}.
		
		Moreover, it holds that $\SS_\KAB$ is injective and continuous with
		\begin{align}
			\label{REG:H1}
			\norm{\SS_\KAB(f,g)}_{\HH^1} 
			\le C\norm{(f,g)}_{(\HH^1_\KA)^*}
		\end{align}
		for a constant $C\ge 0$ depending only on $\Omega$, $K$, $\alpha$ and $\beta$.
		\item Suppose that $\Omega$ is of class $C^{k+2}$ and that $(f,g)\in\VV^k_\A$ for any $k\in\N_0$. 
		Then it holds that $\SS_\KAB(f,g) \in \HH^{k+2}_\KA$ with
		\begin{align}
		\label{REG:HK}
		\norm{\SS_\KAB(f,g)}_{\HH^{k+2}}
		\le C \norm{(f,g)}_{\HH^k}	
		\end{align}
		for a constant $C\ge 0$ depending only on $\Omega$, $K$, $\alpha$, $\beta$ and $k$.
		
		This means that $\SS_\KAB(f,g)$ is a strong solution of the system \eqref{SEC}, i.e., the equations of \eqref{SEC} are satisfied (at least) almost everywhere in $\Omega$ and on $\Gamma$, respectively. 
		\item Suppose that $\Omega$ is of class $C^\infty$ and that $(f,g)\in \VV_\A^m$ for every $m\in\N$. 
		Then it holds that $\SS_\KAB(f,g)\in \CC^\infty$.
	\end{enumerate}
\end{theorem}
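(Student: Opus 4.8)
The plan is to prove the three parts in order, each building on its predecessor. For part (a), the key observation is that by Corollary~\ref{COR:EQU} (stated in the appendix), $\norm{\,\cdot\,}_\KA$ is an equivalent norm on $\WW^1_\KAB$, so that $\bigl(\WW^1_\KAB,\inn{\cdot}{\cdot}_\KA\bigr)$ is a genuine Hilbert space. On this space the right-hand side $(\zeta,\xi)\mapsto \bigang{(f,g)}{(\zeta,\xi)}_{\HH^1_\KA}$ is a bounded linear functional whenever $(f,g)\in\VV^\mo_\A$ — one needs the compatibility condition \eqref{SEC:COMP} built into $\VV^\mo_\A$ precisely so that the functional is well-defined on the quotient-type space $\HH^1_\KA$, and then its restriction to $\WW^1_\KAB$ inherits boundedness. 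Hence the Lax--Milgram theorem (or simply the Riesz representation theorem, since the bilinear form is symmetric and coercive on $\WW^1_\KAB$) yields a unique $(u_{(f,g)},v_{(f,g)})\in\WW^1_\KAB$ satisfying \eqref{WF:SEC} for all test functions in $\WW^1_\KAB$; extending to all $(\zeta,\xi)\in\HH^1_\KA$ by splitting off the mean-value component and using \eqref{SEC:COMP} gives the weak solution in the sense of Definition~\ref{DEF:WS:SEC}. Linearity and injectivity of $\SS_\KAB$ are immediate, and the estimate \eqref{REG:H1} follows by taking $(\zeta,\xi)=(u_{(f,g)},v_{(f,g)})$ in \eqref{WF:SEC}, bounding the right-hand side by $\norm{(f,g)}_{(\HH^1_\KA)^*}\norm{(u,v)}_{\HH^1_\KA}$, and invoking norm equivalence.

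For part (b), I would argue by a bootstrap/difference-quotient argument, treating the interior and boundary regularity separately and then iterating. First, for $k=0$: localize, flatten the boundary, and apply the standard Nirenberg tangential difference-quotient technique to the weak formulation \eqref{WF:SEC} to gain one derivative, obtaining $(u,v)\in\HH^2_\KA$; the normal second derivative of $u$ is then recovered from the equation $-\Lap u=f$ itself, while the boundary conditions \eqref{SEC:2}, \eqref{SEC:3} — which couple $\Lapg v$ and $\deln u$ — are read off in the trace sense once enough regularity is available, and elliptic estimates for the Laplace--Beltrami operator on $\Gamma$ upgrade $v$. The coupling through \eqref{SEC:3} has to be handled with care: in the Robin case $K>0$ one substitutes $\deln u = \sigma(K)(\alpha v - u)$ directly, whereas in the Dirichlet case $K=0$ the constraint $u|_\Gamma=\alpha v$ is already encoded in $\HH^1_\KA$, and $\deln u$ becomes a genuine unknown determined a posteriori from \eqref{SEC:2}. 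For general $k$, one iterates: knowing $(f,g)\in\VV^k_\A$ and $(u,v)\in\HH^{j+2}_\KA$ for some $j<k$, differentiate the equations tangentially and repeat, picking up $\HH^{k+2}_\KA$ after finitely many steps, with the constant in \eqref{REG:HK} depending on $k$ through the number of iterations and on $\Omega$ through its $C^{k+2}$ charts.

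Part (c) is then a short corollary: if $\Omega\in C^\infty$ and $(f,g)\in\VV^m_\A$ for every $m$, apply part (b) for every $k\in\N_0$ to conclude $\SS_\KAB(f,g)\in\HH^{k+2}_\KA$ for all $k$, hence $(u,v)\in\bigcap_{k}\HH^k(\Omega)\times\bigcap_k H^k(\Gamma)$, and the Sobolev embedding theorem gives $(u,v)\in C^\infty(\ov\Omega)\times C^\infty(\Gamma)=\CC^\infty$.

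The main obstacle is the regularity step (b), and within it the bulk–surface coupling. Unlike a scalar elliptic boundary value problem, here the boundary condition \eqref{SEC:2} is itself a (second-order, surface) PDE whose highest-order term $\Lapg v$ must be controlled simultaneously with the normal derivative $\deln u$ coming from the bulk; the two regularities are entangled and must be closed together in a single estimate rather than sequentially. One must also verify that the difference-quotient argument respects the linear constraint defining $\HH^1_{K,\alpha}$ when $K=0$ (tangential differences preserve $u|_\Gamma=\alpha v$, which is what makes the scheme work), and that the compatibility/mean-value constraints are stable under the localization procedure. I expect the cleanest route is to first prove the a priori estimate \eqref{REG:HK} for smooth solutions and smooth data, and then remove the smoothness assumption by an approximation argument together with the uniqueness from part (a).
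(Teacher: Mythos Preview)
Your treatment of parts (a) and (c) matches the paper essentially verbatim: Lax--Milgram on $\WW^1_\KAB$ with the equivalent norm from Corollary~\ref{COR:EQU}, extension of the test class to all of $\HH^1_\KA$ by subtracting a constant and invoking the compatibility condition, and then Sobolev embedding for the $C^\infty$ statement.

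For part (b), however, your route diverges from the paper's. You propose a direct Nirenberg difference-quotient argument on the coupled system, anticipating that the bulk and surface regularities are ``entangled and must be closed together in a single estimate rather than sequentially.'' The paper shows precisely the opposite: the regularities \emph{can} be decoupled and bootstrapped sequentially, by choosing test functions that isolate each subproblem and then citing black-box elliptic regularity. Concretely, in the Robin case $K>0$ one first sets $\zeta=0$ in \eqref{WF:SEC} to see that $v$ weakly solves $-\Lapg v = g - \tfrac{\alpha}{K}(\alpha v - u)$ on $\Gamma$ with $L^2$ right-hand side, so standard regularity on the closed manifold $\Gamma$ gives $v\in H^2(\Gamma)$; then one sets $\xi=0$ to recognize $u$ as a weak solution of a Poisson--Neumann problem with $H^{1/2}$ boundary datum $\tfrac{1}{K}(\alpha v - u)$, and cites Neumann regularity. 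In the Dirichlet case $K=0$ one instead tests with $\zeta\in H^1_0(\Omega)$, $\xi=0$ to get $-\Lap u=f$ distributionally, uses $u\vert_\Gamma=\alpha v\in H^1(\Gamma)$ and Poisson--Dirichlet regularity to reach $u\in H^{3/2}(\Omega)$, extracts $\deln u\in L^2(\Gamma)$ via a trace theorem, feeds this into the surface equation to get $v\in H^2(\Gamma)$, and finally loops back to upgrade $u$ to $H^2(\Omega)$ since now $u\vert_\Gamma=\alpha v\in H^2(\Gamma)$. Higher $k$ follows by straightforward induction on the same scheme.

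Your difference-quotient approach is not wrong in principle and would likely go through, but it is considerably more labor-intensive (localization, flattening, verifying that tangential differences preserve the constraint space when $K=0$, closing the coupled estimate), and your stated expectation that a simultaneous estimate is \emph{necessary} is mistaken. The paper's decoupling buys a much shorter proof that leans entirely on existing scalar elliptic theory; what your approach would buy, if carried out, is self-containedness and perhaps slightly sharper tracking of the constants, at the cost of reproving regularity results that are already available off the shelf.
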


\begin{proof}
In this proof, let $C>0$ denote generic constants depending only on $\Omega$, $K$, $\alpha$ and $\beta$.

\textit{Proof of $\mathrm{(a)}$.}
Recall that $\WW^1_\KAB$ is standardly endowed with the inner product $\inn{\cdot}{\cdot}_\KA$ and its induced norm $\norm{\,\cdot\,}_\KA$,
and that
\begin{align*}
	(f,g) \in \VV^\mo_\A \subset (\HH^1_\KA)^*.
\end{align*}
Invoking Corollary~\ref{COR:EQU}, we obtain the estimate
\begin{align}
\label{EST:DUAL}
	\bigang{(f,g)}{(\bar\zeta,\bar\xi)}_{\HH^1_\KA} 
	&\le \norm{(f,g)}_{(\HH^1_\KA)^*} \norm{(\bar\zeta,\bar\xi)}_{\HH^1} \notag\\
	&\le C \norm{(f,g)}_{(\HH^1_\KA)^*} \norm{(\bar\zeta,\bar\xi)}_\KA
\end{align}
for all $(\bar\zeta,\bar\xi)\in \WW^1_\KAB$.
This means that the mapping
\begin{align*}
	\WW^1_\KAB \ni (\bar\zeta,\bar\xi) \mapsto \bigang{(f,g)}{(\bar\zeta,\bar\xi)}_{\HH^1_\KA} \in \R
\end{align*}
defines a continuous linear functional which thus belongs to $(\WW^1_\KAB)^*$.
Hence, the Lax-Milgram theorem implies the existence of a unique pair $(u_{(f,g)},v_{(f,g)})\in\WW^1_\KAB$ such that
\begin{align}
\label{EQ:LM}
	\biginn{(u_{(f,g)},v_{(f,g)})}{(\bar\zeta,\bar\xi)}_\KAB 
	= \bigang{(f,g)}{(\bar\zeta,\bar\xi)}_{\HH^1_\KA}
	\quad
	\text{for all}\; (\bar\zeta,\bar\xi)\in \WW^1_\KAB.
\end{align}
It remains to show that \eqref{EQ:LM} holds true for all test functions in $\HH^1_\KA$. To this end, let $(\zeta,\xi)\in \HH^1_\KA$ be arbitrary. We choose
\begin{align*}
	\bar\zeta := \zeta - \beta c
	\quad\text{and}\quad \bar\xi := \xi - c
	\quad\text{where}\quad c:=	
	\frac{\beta\intO \zeta \dx + \intG\xi\dS}{\beta^2\abs{\Omega} + \abs{\Gamma}}.
\end{align*}
By this construction, we have $(\bar\zeta,\bar\xi)\in \HH^1_\KA$ with
\begin{align*}
	\beta \intO \bar\zeta \dx + \intG \bar\xi \dS
	= \beta \intO \zeta \dx + \intG \xi \dS - (\beta^2\abs{\Omega} + \abs{\Gamma}) c
	= 0.
\end{align*}
This means that $(\bar\zeta,\bar\xi)\in\WW^1_\KAB$. Plugging $(\bar\zeta,\bar\xi)$ into \eqref{EQ:LM} we observe that the constant terms cancel out. Hence, since $(\zeta,\xi)\in\HH^1_\KA$ was arbitrary, we conclude that 
\eqref{EQ:LM} holds true for all test functions $(\zeta,\xi)\in\HH^1_\KA$. This means that $(u_{(f,g)},v_{(f,g)})$ is the unique weak solution of the system \eqref{SEC} in the space $\WW^1_\KAB$.
In particular, the solution operator $\SS_\KAB$ is well-defined.

Testing the weak formulation \eqref{WF:SEC} with $(\zeta,\xi)=(u_{(f,g)},v_{(f,g)})$, and using the estimate \eqref{EST:DUAL} as well as Young's inequality, we conclude that
\begin{align}
\label{EST:KA1}
\begin{aligned}
	\norm{(u_{(f,g)},v_{(f,g)})}_\KA^2 
	&= \bigang{(f,g)}{(u_{(f,g)},v_{(f,g)})}_{\HH^1_\KA} \\
	&\le C \norm{(f,g)}_{(\HH^1_\KA)^*} \norm{(u_{(f,g)},v_{(f,g)})}_{\KA}
\end{aligned}
\end{align}
which proves \eqref{REG:H1}. Thus, (a) is established.

In the following we write $(u,v):=\SS_\KAB(f,g)$ for brevity. The generic constants denoted by $C$ may now also depend on $k$.

\textit{Proof of $\mathrm{(b)}$ in the case $K>0$.} 
We first prove the assertion for $k=0$. Fixing $\zeta=0$ the weak formulation \eqref{WF:SEC} reduces to
\begin{align*}
	\intG \Gradg v \cdot \Gradg \xi \dS
	= \intG g\xi - \frac \alpha K(\alpha v-u) \xi \dS,
	\qquad \xi\in H^1(\Gamma).
\end{align*}
This means that $v$ is a weak solution of the elliptic equation
\begin{align*}
	-\Lapg v = G \;\;\text{on}\;\Gamma
	\qquad\text{with}\qquad G:= g - \frac 1K \alpha(\alpha v- u).
\end{align*}
As $u\in H^1(\Omega)\emb H^{1/2}(\Gamma)$, we know that $G\in L^2(\Gamma)$. Hence, we can apply regularity theory for elliptic equations on submanifolds (see, e.g., \cite[s.\,5,\,Thm\,1.3]{Taylor} and recall that $\Gamma$ is a compact submanifold of class $C^2$ without boundary) to infer that $v\in H^2(\Gamma)$ with
\begin{align*}
	\norm{v}_{H^2(\Gamma)} \le C\norm{G}_{L^2(\Gamma)} + C\norm{v}_{H^1(\Gamma)}
	\le C \norm{g}_{L^2(\Gamma)} + C\norm{(u,v)}_{\HH^1}.
\end{align*}
Proceeding as in \eqref{EST:KA1} and using the equivalence of the norms $\norm{\,\cdot\,}_\KA$ and $\norm{\,\cdot\,}_{\HH^1}$ (see Corollary \ref{COR:EQU}), we conclude that 
\begin{align}
\label{EST:KA}
\begin{aligned}
\norm{(u,v)}_{\HH^1}
\le C\norm{(u,v)}_\KA 
\le C\norm{(f,g)}_{\HH^0}
\end{aligned}
\end{align}
and thus,
\begin{align}
\label{EST:VH2}
\norm{v}_{H^2(\Gamma)} 
\le C \norm{(f,g)}_{\HH^0}.
\end{align}
Now, we choose $\xi=0$ in \eqref{WF:SEC}. This leads to 
\begin{align*}
\intO \Grad u \cdot \Grad \zeta \dx
= \intO f\zeta \dx + \sigma(K) \intG (\alpha v-u) \zeta \dS,
\qquad \zeta\in H^1(\Omega).
\end{align*}
This means that $u$ is a weak solution to the Poisson-Neumann problem
\begin{align*}
	\left\{
	\begin{aligned}
		-\Lap u &= f &&\text{in}\;\Omega,\\
		\deln u &= F &&\text{on}\;\Gamma,\\
	\end{aligned}
	\right.
	\qquad\text{with}\qquad F:= \frac 1 K(\alpha v- u).
\end{align*}
From $u\in H^1(\Omega)\emb H^{1/2}(\Gamma)$ and \eqref{SEC:ID}, it follows that
\begin{align*}
	F \in H^{1/2}(\Gamma) 
	\qquad\text{and}\qquad
	\intG F \dS = -\intO f \dx.
\end{align*}
This allows us to apply regularity theory for Poisson's equation with inhomogeneous Neumann boundary condition (see, e.g., \cite[s.\,5,\,Prop.\,7.7]{Taylor}) to infer that $u\in H^2(\Omega)$ with
\begin{align*}
	\norm{u}_{H^2(\Omega)} \le C \norm{f}_{L^2(\Omega)} + C\norm{u}_{H^1(\Omega)} + C\norm{F}_{H^{1/2}(\Gamma)}.
\end{align*}
Using the continuous embeddings $H^1(\Omega)\emb H^{1/2}(\Gamma)$ and $H^1(\Gamma)\emb H^{1/2}(\Gamma)$, we obtain
\begin{align*}
\norm{F}_{H^{1/2}(\Gamma)} 
\le C\norm{u}_{H^{1/2}(\Gamma)} + C\norm{v}_{H^{1/2}(\Gamma)}
\le C\norm{u}_{H^{1}(\Omega)} + C\norm{v}_{H^{1}(\Gamma)},
\end{align*}
and thus,
\begin{align}
	\label{EST:REG:ROB}
	\norm{u}_{H^2(\Omega)} \le C \norm{f}_{L^2(\Omega)} + C\norm{u}_{H^1(\Omega)} + C\norm{v}_{H^{1}(\Gamma)}.
\end{align}
Combining \eqref{EST:KA}, \eqref{EST:VH2} and \eqref{EST:REG:ROB}, we eventually conclude that
\begin{align*}
	\norm{(u,v)}_{\HH^{2}}
	\le C \norm{(f,g)}_{\HH^0}.	
\end{align*}
This proves the assertion if $k=0$. 

The result for $k>0$ can be established inductively as the regularity results cited above hold true for any integer $k\ge 0$. Assuming that $(u,v)\in\VV_\A^k$ is already established for some $k\ge 0$, we can proceed analogously to the case $k=0$ to conclude that $(u,v)\in\HH^{k+2}$ with
\begin{align*}
\norm{(u,v)}_{\HH^{k+2}}
\le C \norm{(f,g)}_{\HH^k}.	
\end{align*}
This means that (b) is established if $K>0$.

\textit{Proof of $\mathrm{(b)}$ in the case $K=0$.} As in the case $K>0$, we first prove the assertion for $k=0$. Choosing an arbitrary test function $\zeta\in H^1_0(\Omega)$ and fixing $\xi=0$, it obviously holds that $(\zeta,\xi)\in\HH^1_\KA$ since $\zeta\vert_\Gamma = 0 = \alpha\xi$ is satisfied almost everywhere on $\Gamma$. Plugging $(\zeta,\xi)$ into the weak formulation \eqref{WF:SEC}, we infer that
\begin{align*}
\intO \Grad u \cdot \Grad \zeta \dx
= \intO f\zeta \dx .
\end{align*}
In particular, as $\zeta\in H^1_0(\Omega)$ was arbitrary, this holds true for all test functions $\zeta\in C^\infty_c(\Omega)$.
This implies that the distributional derivative $\Lap u$ belongs to $L^2(\Omega)$ and satisfies
\begin{align*}
	-\Lap u = f \quad\text{a.e. in}\;\Omega.
\end{align*}
We further know that $u\vert_\Gamma=\alpha v \in H^1(\Gamma)$. Hence, we can apply elliptic regularity theory for the Poisson--Dirichlet problem (see, e.g., \cite[Thm.~3.2]{Brezzi-Gilardi} or \cite[Thm.~A.2]{Colli-Fukao-Lam}) to conclude that $u\in H^{3/2}(\Omega)$ with
\begin{align}
\label{EST:REG:U}
	\norm{u}_{H^{3/2}(\Omega)} \le C \big( \norm{f}_{L^2(\Omega)} + \norm{v}_{H^1(\Gamma)} \big).
\end{align}
Now, since $\Lap u \in L^2(\Omega)$ and $u\in H^{3/2}(\Omega)$, we can use a variant of the elliptic trace theorem (see, e.g., \cite[Thm.~2.27]{Brezzi-Gilardi} or \cite[Thm.~A.1]{Colli-Fukao-Lam}) to conclude that $\deln u \in L^2(\Gamma)$ with
\begin{align}
	\label{EST:REG:DELNU}
	\norm{\deln u}_{L^2(\Gamma)} \le C\norm{u}_{H^{3/2}(\Omega)}.
\end{align}
 Consequently, integration by parts gives
\begin{align}
\label{EQ:DELNU}
	\intO \Grad u \cdot \Grad \zeta \dx - \intO f\zeta \dx = \intG \deln u \zeta \dS \quad \text{for all $\zeta\in H^1(\Omega)$}.
\end{align}
Let now $\xi\in H^1(\Gamma)$ be arbitrary. According to the inverse trace theorem (see, e.g., \cite[Thm.~4.2.3]{Hsiao-Wendland}) there exists a function $\bar\xi \in H^{3/2}(\Omega)$ such that $\bar\xi\vert_\Gamma = \xi$. We choose $\zeta = \alpha^\mo\bar\xi$ and thus $(\zeta,\xi)\in \HH^1_\KA$. Plugging this pair of test functions into the weak formulation \eqref{WF:SEC} and using the identity \eqref{EQ:DELNU}, we obtain
\begin{align*}
	\intG \Gradg v\cdot\Gradg \xi \dS = \intG (g - \alpha \deln u)\xi \dS.
\end{align*}
As $\xi\in H^1(\Gamma)$ was arbitrary, this implies that $v$ is a weak solution of the elliptic equation
\begin{align*}
-\Lapg v = G \;\;\text{on}\;\Gamma
\qquad\text{with}\qquad G:= g - \alpha\deln u.
\end{align*}
Since $G\in L^2(\Gamma)$, we can apply regularity theory for elliptic equations on submanifolds (see, e.g., \cite[s.\,5,\,Thm\,1.3]{Taylor} and recall that $\Gamma$ is a compact submanifold of class $C^2$ without boundary) to conclude that $v\in H^2(\Gamma)$ with
\begin{align*}
\norm{v}_{H^2(\Gamma)} \le C\norm{G}_{L^2(\Gamma)} 
\le C \norm{g}_{L^2(\Gamma)} + C\norm{\deln u}_{L^2(\Gamma)}.
\end{align*}
Using the estimate \eqref{EST:KA} (which obviously holds true for $K=0$), \eqref{EST:REG:U} and \eqref{EST:REG:DELNU}, we thus get
\begin{align}
\label{EST:REG:V}
\norm{v}_{H^2(\Gamma)} 
\le C \norm{(f,g)}_{\HH^0}.
\end{align}
As $u\vert_\Gamma = \alpha v$ almost everywhere on $\Gamma$, we further deduce that $u\vert_\Gamma \in H^2(\Gamma)$. Recalling that $-\Lap u = f$ almost everywhere in $\Omega$, and invoking
elliptic regularity theory for the Poisson--Dirichlet problem (see, e.g., \cite[Thm.~3.2]{Brezzi-Gilardi} or \cite[Thm.~A.2]{Colli-Fukao-Lam}), we eventually conclude that $u\in H^2(\Omega)$ with
\begin{align}
\label{EST:REG:U:2}
\norm{u}_{H^2(\Omega)} \le C \big( \norm{f}_{L^2(\Omega)} + \norm{v}_{H^2(\Gamma)} \big) \le \norm{(f,g)}_{\HH^0}.
\end{align} 
Hence, in combination with \eqref{EST:REG:V}, the estimate
\begin{align*}
	\norm{(u,v)}_{\HH^2} \le C \norm{(f,g)}_{\HH^0}
\end{align*}
directly follows.
This proves the assertion if $k=0$.

The result for $k>0$ can be established inductively as the regularity results cited above hold true for any integer $k\ge 0$.
Assuming that $(u,v)\in\WW^k_\KAB$ is already established for some $k\ge 0$, we can proceed analogously to the case $k=0$ to conclude that $(u,v)\in\HH^{k+2}$ with
\begin{align*}
\norm{(u,v)}_{\HH^{k+2}}
\le C \norm{(f,g)}_{\HH^k}.	
\end{align*}
This means that (b) is established if $K=0$.

In summary, this completes the proof of (b).

\textit{Proof of $\mathrm{(c)}$.} The claim follows by a simple induction exploiting Sobolev's embedding theorem.

Hence, the proof of Theorem~\ref{THM:SEC} is complete.
\end{proof}

\medskip

\begin{corollary}
	\label{COR:SOL}
	Let $K \ge 0$ and $\alpha\in\R$ be arbitrary, let $\Omega\subset \R^d$ be a bounded Lipschitz domain, and let $\SS_\KAA$ denote the solution operator that was introduced in Theorem~\ref{THM:SEC}(a) (with $\beta:=\alpha$). Then the operator 
	\begin{align}
		\SS_\KAA^0 := \SS_\KAA\vert_{\VV^0_\A}: \VV^0_\A \to \VV^0_\A
	\end{align}
	has the following properties: 
	\begin{enumerate}[label = $\mathrm{(\alph*)}$, leftmargin = *]
		\item $\SS_\KAA^0$ is linear, continuous and compact.
		\item $\SS_\KAA^0$ is injective and thus, it holds that $\kernel(\SS_\KAA^0)=\{(0,0)\}$. 
		\item $\SS_\KAA^0$ is self-adjoint with respect to the inner product $\inn{\cdot}{\cdot}_{\HH^0}$ on $\VV^0_\A$.
	\end{enumerate}
\end{corollary}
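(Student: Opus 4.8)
The plan is to derive properties (a), (b), (c) of the restricted solution operator $\SS_\KAA^0$ essentially as corollaries of Theorem~\ref{THM:SEC}(a) together with the Rellich--Kondrachov compactness theorem.

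First, for (a), linearity is immediate from the linearity of the weak formulation \eqref{WF:SEC} in $(f,g)$ and the uniqueness statement in Theorem~\ref{THM:SEC}(a). For continuity from $\VV^0_\A$ to $\VV^0_\A$: by Theorem~\ref{THM:SEC}(a) we have $\norm{\SS_\KAA(f,g)}_{\HH^1} \le C\norm{(f,g)}_{(\HH^1_\KA)^*}$, and since $\VV^0_\A = \HH^0\cap\VV^\mo_\A$ embeds continuously into $(\HH^1_\KA)^*$ (the dual pairing is just the $\HH^0$-inner product on such pairs, and $\norm{(f,g)}_{(\HH^1_\KA)^*}\le C\norm{(f,g)}_{\HH^0}$), we get $\norm{\SS_\KAA^0(f,g)}_{\HH^1}\le C\norm{(f,g)}_{\HH^0}$; composing with $\HH^1\emb\HH^0$ gives continuity into $\VV^0_\A$. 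For compactness, factor $\SS_\KAA^0$ as $\VV^0_\A \to \WW^1_\KAA \emb \VV^0_\A$ where the first arrow is bounded (just shown) and the inclusion $\HH^1\emb\HH^0$ is compact by Rellich--Kondrachov on $\Omega$ and on $\Gamma$; a bounded operator followed by a compact one is compact.

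For (b), injectivity was already asserted in Theorem~\ref{THM:SEC}(a) for $\SS_\KAB$ on all of $\VV^\mo_\A$, hence a fortiori for its restriction to $\VV^0_\A$; equivalently, if $\SS_\KAA^0(f,g)=(0,0)$ then $0 = \biginn{(0,0)}{(\zeta,\xi)}_\KA = \bigang{(f,g)}{(\zeta,\xi)}_{\HH^1_\KA}$ for all $(\zeta,\xi)\in\HH^1_\KA$, and since $C^\infty$-pairs and in particular $C^\infty_c(\Omega)\times\{0\}$ lie in $\HH^1_\KA$, this forces $f=0$ in $\Omega$ and then $g=0$ on $\Gamma$; thus $\kernel(\SS_\KAA^0)=\{(0,0)\}$.

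For (c), self-adjointness with respect to $\inn{\cdot}{\cdot}_{\HH^0}$: take $(f_1,g_1),(f_2,g_2)\in\VV^0_\A$ and write $(u_i,v_i):=\SS_\KAA^0(f_i,g_i)\in\WW^1_\KAA$. Using the weak formulation \eqref{WF:SEC} with the roles of solution and test function interchanged, together with the fact that $(u_1,v_1),(u_2,v_2)$ are themselves admissible test functions in $\HH^1_\KA$ and that for pairs in $\VV^0_\A$ the pairing $\bigang{(f,g)}{\cdot}_{\HH^1_\KA}$ coincides with $\biginn{(f,g)}{\cdot}_{\HH^0}$, we compute
\begin{align*}
	\biginn{\SS_\KAA^0(f_1,g_1)}{(f_2,g_2)}_{\HH^0}
	&= \biginn{(u_1,v_1)}{(f_2,g_2)}_{\HH^0}
	= \biginn{(u_2,v_2)}{(u_1,v_1)}_\KA \\
	&= \biginn{(u_1,v_1)}{(u_2,v_2)}_\KA
	= \biginn{(f_1,g_1)}{(u_2,v_2)}_{\HH^0}
	= \biginn{(f_1,g_1)}{\SS_\KAA^0(f_2,g_2)}_{\HH^0},
\end{align*}
where the symmetry of $\inn{\cdot}{\cdot}_\KA$ was used in the middle step. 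The only points requiring a little care — and the mild obstacle here — are the identifications $\bigang{(f,g)}{(\zeta,\xi)}_{\HH^1_\KA} = \biginn{(f,g)}{(\zeta,\xi)}_{\HH^0}$ valid when $(f,g)\in\VV^0_\A$ and $(\zeta,\xi)\in\HH^1_\KA$ (so that the weak formulation may be read against the $L^2$-type pairing), and the fact that a solution $(u_i,v_i)\in\WW^1_\KAA\subset\HH^1_\KA$ is an admissible test function; both follow directly from the definitions in \eqref{P:INNKA}, \eqref{P:DUAL} and Definition~\ref{DEF:WS:SEC}.
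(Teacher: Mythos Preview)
Your proof is correct and follows essentially the same approach as the paper: continuity via the a priori estimate of Theorem~\ref{THM:SEC}(a), compactness via the compact embedding $\WW^1_\KAA\hookrightarrow\VV^0_\A$, injectivity directly from Theorem~\ref{THM:SEC}(a), and self-adjointness from the symmetry of $\inn{\cdot}{\cdot}_\KA$ together with the weak formulation. One small caution regarding your alternative injectivity argument in (b): when $K=0$ and $\alpha\neq 0$, pairs $(0,\xi)$ are not in $\HH^1_\KA$, so concluding $g=0$ requires choosing $(\zeta,\xi)$ with $\zeta\vert_\Gamma=\alpha\xi$ (e.g., via an extension); since this argument is redundant anyway, this does not affect the validity of your proof.
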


\medskip

\begin{proof}
	\textit{Proof of $\mathrm{(a)}$.} It directly follows from Theorem~\ref{THM:SEC}(a) that the operator $\SS_\KAA^0$ is well-defined and linear. Let now $(f,g)\in\VV^0_\A$ be arbitrary. Testing the weak formulation \eqref{WF:SEC} written for $(u,v)=\SS_\KAA^0(f,g)$ with $(\zeta,\xi)=\SS_\KAA^0(f,g)$, and using the Cauchy-Schwarz inequality, we obtain
	\begin{align*}
		\norm{\SS_\KAA^0(f,g)}_\KA^2 
		= \biginn{(f,g)}{\SS_\KAA^0(f,g)}_{\HH^0} 
		\le \norm{(f,g)}_{\HH^0}\norm{\SS_\KAA^0(f,g)}_\KA.
	\end{align*}
	and the continuity of the operator $\SS_\KAA^0$ follows immediately.
	Since 
	\begin{align*}
		\SS_\KAA^0(\VV^0_\A) \subset \WW^1_\KAA ,
	\end{align*} 
	and as the embedding $\WW^1_\KAA\emb\VV^0_\A$ is compact, we conclude that $\SS_\KAA^0$ is a compact operator.
	
	\textit{Proof of $\mathrm{(b)}$.} Theorem~\ref{THM:SEC}(a) states that the operator $\SS_\KAA^0$ is injective and thus, its kernel is trivial. 
	
	\textit{Proof of $\mathrm{(c)}$.} Let $(f_1,g_1), (f_2,g_2)\in \VV^0_\A$ be arbitrary. 
	Recalling the definition of the operator $\SS_\KAA$, we obtain
	\begin{align*}
		&\inn{\SS_\KAA(f_1,g_1)}{(f_2,g_2)}_{\HH^0} = \inn{\SS_\KAA(f_1,g_1)}{\SS_\KAA(f_2,g_2)}_\KA \\
		&\quad = \inn{(f_1,g_1)}{\SS_\KAA(f_2,g_2)}_{\HH^0},
	\end{align*}
	which proves the assertion of (c).
	
	Thus, the proof is complete.
\end{proof}

\medskip

\begin{corollary}
	\label{COR:IP}
	Let $K\ge0$ and $\A,\B \in\R$ with $\A\B\abso+\absg\neq 0$ be arbitrary, and let $\Omega\subset \R^d$  be a bounded Lipschitz domain.
	Then the bilinear form
	\begin{align*}
		&\inn{\cdot}{\cdot}_\KABS: \VV^\mo_\A \times \VV^\mo_\A \to \R,\\
		&\inn{(f_1,g_1)}{(f_2,g_2)}_\KABS := \biginn{\SS_\KAB(f_1,g_2)}{\SS_\KAB(f_1,g_2)}_\KA
	\end{align*}
	defines an inner product on the space $\VV^\mo_\A$. The induced norm
	\begin{align*}
		\norm{(f,g)}_\KABS := \inn{(f,g)}{(f,g)}_\KABS^{1/2}, \quad (f,g) \in \VV^\mo_\A,
	\end{align*}
	is equivalent to the norm $\norm{\,\cdot\,}_{(\HH^1_\KA)^*}$ on $\VV^\mo_\A$ and thus, the space
	\begin{align*}
		\big( \VV^\mo_\A,  \inn{\cdot}{\cdot}_\KABS, \norm{\,\cdot\,}_\KABS\big)
	\end{align*}
	is a Hilbert space.
\end{corollary}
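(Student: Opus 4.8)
The plan is to verify that $\inn{\cdot}{\cdot}_\KABS$ satisfies the three inner-product axioms, then to establish the two-sided norm estimate against $\norm{\,\cdot\,}_{(\HH^1_\KA)^*}$, and finally to deduce completeness. Bilinearity and symmetry are immediate: by Theorem~\ref{THM:SEC}(a) the solution operator $\SS_\KAB$ is linear, and $\inn{\cdot}{\cdot}_\KA$ is a symmetric bilinear form on $\HH^1$ by its defining formula in \eqref{P:INNKA}, so the composition $\big((f_1,g_1),(f_2,g_2)\big)\mapsto\biginn{\SS_\KAB(f_1,g_1)}{\SS_\KAB(f_2,g_2)}_\KA$ is again symmetric and bilinear on $\VV^\mo_\A$. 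For positive definiteness, note that $\inn{(f,g)}{(f,g)}_\KABS=\norm{\SS_\KAB(f,g)}_\KA^2\ge 0$; if this quantity vanishes, then $\SS_\KAB(f,g)=(0,0)$ because $\norm{\,\cdot\,}_\KA$ is a genuine norm on $\WW^1_\KAB$ (see \eqref{P:INNKA}), and the injectivity of $\SS_\KAB$ asserted in Theorem~\ref{THM:SEC}(a) then forces $(f,g)=(0,0)$.

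The core of the argument is the equivalence of $\norm{\,\cdot\,}_\KABS$ with $\norm{\,\cdot\,}_{(\HH^1_\KA)^*}$ on $\VV^\mo_\A$. The bound $\norm{(f,g)}_\KABS\le C\norm{(f,g)}_{(\HH^1_\KA)^*}$ is already contained in the proof of Theorem~\ref{THM:SEC}(a): writing $(u,v):=\SS_\KAB(f,g)$ and testing the weak formulation with $(\zeta,\xi)=(u,v)$ produced, via Corollary~\ref{COR:EQU}, precisely the estimate \eqref{EST:KA1}, which upon dividing by $\norm{(u,v)}_\KA$ gives $\norm{(f,g)}_\KABS=\norm{(u,v)}_\KA\le C\norm{(f,g)}_{(\HH^1_\KA)^*}$. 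For the reverse bound I would start from the weak formulation \eqref{WF:SEC}, i.e. $\bigang{(f,g)}{(\zeta,\xi)}_{\HH^1_\KA}=\biginn{(u,v)}{(\zeta,\xi)}_\KA$ for all $(\zeta,\xi)\in\HH^1_\KA$. Since $\inn{\cdot}{\cdot}_\KA$ is a positive semi-definite symmetric bilinear form on $\HH^1$, the Cauchy--Schwarz inequality applies and yields $\bigang{(f,g)}{(\zeta,\xi)}_{\HH^1_\KA}\le\norm{(u,v)}_\KA\,\biginn{(\zeta,\xi)}{(\zeta,\xi)}_\KA^{1/2}$; combining this with the boundedness estimate $\biginn{(\zeta,\xi)}{(\zeta,\xi)}_\KA\le C\norm{(\zeta,\xi)}_{\HH^1}^2$ (which follows from the trace inequality $\norm{\zeta}_{L^2(\Gamma)}\le C\norm{\zeta}_{H^1(\Omega)}$ and is contained in the proof of Corollary~\ref{COR:EQU}) and taking the supremum over all $(\zeta,\xi)\in\HH^1_\KA$ with $\norm{(\zeta,\xi)}_{\HH^1}\le 1$ gives $\norm{(f,g)}_{(\HH^1_\KA)^*}\le C\norm{(u,v)}_\KA=C\norm{(f,g)}_\KABS$.

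It remains to establish completeness, and here I would argue that, by its very definition, $\SS_\KAB$ acts as an isometry from $(\VV^\mo_\A,\norm{\,\cdot\,}_\KABS)$ into the Hilbert space $(\WW^1_\KAB,\norm{\,\cdot\,}_\KA)$, and that it is in fact surjective onto $\WW^1_\KAB$: given $(u,v)\in\WW^1_\KAB$, the map $(\zeta,\xi)\mapsto\biginn{(u,v)}{(\zeta,\xi)}_\KA$ is a bounded linear functional on $\HH^1_\KA$ by the boundedness estimate above, it is therefore represented by a pair $(f,g)$ that satisfies the compatibility relation defining $\VV^\mo_\A$ (seen by evaluating the functional at $(\zeta,\xi)=(\alpha,1)\in\HH^1_\KA$, cf.\ Remark~\ref{REM:WF:SEC}(a)), and the uniqueness part of Theorem~\ref{THM:SEC}(a) then forces $\SS_\KAB(f,g)=(u,v)$. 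Hence $(\VV^\mo_\A,\norm{\,\cdot\,}_\KABS)$ is isometrically isomorphic to a complete space and is therefore complete; being an inner product space as well, it is a Hilbert space, and since $\norm{\,\cdot\,}_\KABS$ has just been shown to be equivalent to $\norm{\,\cdot\,}_{(\HH^1_\KA)^*}$ the claim follows. I expect the lower bound in the norm equivalence to be the step requiring the most care, as it relies on invoking Cauchy--Schwarz for the merely semi-definite form $\inn{\cdot}{\cdot}_\KA$ on all of $\HH^1_\KA$ together with its $\HH^1$-boundedness; the surjectivity claim used in the completeness argument is the secondary delicate point, since one must check that the abstract functional is realized by an admissible source term, in particular in the Dirichlet case $K=0$.
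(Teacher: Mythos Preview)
Your proof is correct and follows essentially the same route as the paper: the inner-product axioms via linearity and injectivity of $\SS_\KAB$, the upper bound from \eqref{EST:KA1}, and the lower bound via the weak formulation combined with Cauchy--Schwarz for the semi-definite form $\inn{\cdot}{\cdot}_\KA$ and the trace-type estimate $\norm{(\zeta,\xi)}_\KA\le C\norm{(\zeta,\xi)}_{\HH^1}$. The only difference is that the paper infers completeness directly from the norm equivalence (tacitly using that $\VV^\mo_\A$ is closed), whereas you supply an explicit isometric-isomorphism argument onto $\WW^1_\KAB$; this extra care is sound but not strictly needed.
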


\medskip

\begin{proof}
	The mapping $\inn{\cdot}{\cdot}_\KABS$ is obviously well-defined, bilinear and symmetric. Moreover, it holds that
	\begin{align*}
		\inn{(f,g)}{(f,g)}_\KABS \ge 0 \quad\text{for all}\; (f,g)\in \VV^\mo_\A.
	\end{align*}
	Recalling that $\inn{\cdot}{\cdot}_\KA$ is an inner product on $\WW^1_\KAB$, and that the solution operator $\SS_\KAB$ is linear and injective, we conclude that
	\begin{align*}
		&\inn{(f,g)}{(f,g)}_\KABS = 0 
		\quad\Leftrightarrow\quad \SS_\KAB(f,g) = (0,0) 
		\quad\Leftrightarrow\quad (f,g) = (0,0).
	\end{align*}
	This means that $\inn{\cdot}{\cdot}_\KABS$ is positive definite and thus, it defines an inner product on the space $\VV^\mo_\A$.
	
	To prove the equivalence of the norms $\norm{\,\cdot\,}_\KABS$ and $\norm{\,\cdot\,}_{(\HH^1_\KA)^*}$, let $(f,g)\in\VV^\mo_\A$ be arbitrary, and let $C>0$ denote generic constants depending only on $\Omega$, $K$, $\alpha$ and $\beta$. We first infer from Theorem~\ref{THM:SEC}(a) and Corollary~\ref{COR:EQU} that
	\begin{align*}
		\norm{(f,g)}_\KABS 
		= \norm{\SS_\KAB(f,g)}_\KA 
		\le C \norm{\SS_\KAB(f,g)}_{\HH^1} 
		\le C \norm{(f,g)}_{(\HH^1_\KA)^*}.
	\end{align*}
	Let now $(\zeta,\xi)\in\HH^1_\KA$ with $\norm{(\zeta,\xi)}_{\HH^1} \le 1$ be arbitrary. Recalling the definition of the operator $\SS_\KAB$ and using  Lemma~\ref{LEM:POIN} we get
	\begin{align*}
		&\bigang{(f,g)}{(\zeta,\xi)}_{\HH^1_\KA} 
		= \biginn{\SS_\KAB(f,g)}{(\zeta,\xi)}_\KA
		\le \norm{\SS_\KAB(f,g)}_\KA \norm{(\zeta,\xi)}_\KA \\
		&\quad \le C \norm{\SS_\KAB(f,g)}_\KA \norm{(\zeta,\xi)}_{\HH^1} \le C \norm{\SS_\KAB(f,g)}_\KA 
	\end{align*}
	and thus,
	\begin{align*}
		\norm{(f,g)}_{(\HH^1_\KA)^*} \le C \norm{\SS_\KAB(f,g)}_\KA .
	\end{align*}
	This means that the equivalence of the norms is established and thus, the proof is complete.
\end{proof}

\section{A second-order eigenvalue problem}

For $K\ge 0$, $\alpha\in\R$ and $\lambda\in\R$, we now consider the following second-order eigenvalue problem with bulk-surface coupling of Robin/Dirichlet type:
\begin{subequations}
	\label{SEIG}
	\begin{alignat}{3}
	\label{SEIG:1}
	-\Lap u &= \lambda u &&\quad\text{in}\;\Omega, \\
	\label{SEIG:2}
	-\Lapg v + \alpha \deln u &= \lambda v &&\quad\text{on}\;\Gamma, \\
	\label{SEIG:3}
	K\deln u &= \alpha v - u &&\quad\text{on}\;\Gamma.
	\end{alignat}
\end{subequations}

We immediately notice that for all $\lambda\in\R$ there exists at least one weak solution of the system \eqref{SEIG} in the space $\WW^1_\KAA$, as the pair of null functions $(u,v)=(0,0) \in \WW^1_\KAA$ trivially solves the equations. However, weak solutions of \eqref{SEIG} are generally not unique. In the following we will of course be interested in nontrivial solutions. The following proposition provides some important properties of weak solutions to the problem \eqref{SEIG}.

\begin{proposition} \label{PROP:SEIG}
	Let $K\ge 0$, $\alpha\in\R$ and $\lambda\in\R$ be arbitrary, and let $\Omega\subset \R^d$  be a bounded Lipschitz domain. Then the following holds:
	\begin{enumerate}[label = $\mathrm{(\alph*)}$, leftmargin = *]
		\item Let $(u,v)\in \WW^1_\KAA$ be any weak solution of \eqref{SEIG} in the sense of Definition~\ref{DEF:WS:SEC}. Then $(u,v)$ satisfies the relation
		\begin{align}
			\label{REL:SEIG}
			(u,v) = \SS_\KAA(\lambda u,\lambda v) \quad\text{a.e. in $\Omega$,}
		\end{align}
		as well as the identity
		\begin{align}
			\label{ID:SEIG}
			\norm{(u,v)}_\KA^2 = \lambda \norm{(u,v)}_{\HH^0}^2.
		\end{align}
		\item Suppose that $\Omega$ is of class $C^{k+2}$ for any $k\in\N_0$, 
		and let $(u,v)$ be any weak solution of \eqref{SEIG}. 
		Then it holds that $(u,v)\in\WW^{k+2}_\KAA$ with
		\begin{align*}
		\norm{(u,v)}_{\HH^{k+2}} \le C \lambda\, \norm{(u,v)}_{\HH^k}
		\end{align*}
		for a constant $C\ge 0$ depending only on $\Omega$, $K$, $\alpha$ and $k$.
		
		This means that $(u,v)$ is a strong solution of the eigenvalue problem \eqref{SEIG}.
		\item Suppose that $\Omega$ is of class $C^\infty$, 
		and let $(u,v)$ be any weak solution. 
		Then it holds that $(u,v)\in \CC^\infty$.
	\end{enumerate}
\end{proposition}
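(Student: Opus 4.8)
The plan is to recognise that any weak solution of the eigenvalue problem \eqref{SEIG} is, by definition, a weak solution of the inhomogeneous system \eqref{SEC} with source term $(\lambda u,\lambda v)$, so that the three assertions all follow from Theorem~\ref{THM:SEC} (applied with $\beta:=\alpha$) together with the identity \eqref{REL:SEIG}. For part~(a) I would first observe that, since $(u,v)\in\WW^1_\KAA=\HH^1_\KA\cap\VV^1_\A$, the mean condition $\alpha\abso\meano{u}+\absg\meang{v}=0$ holds; hence the scaled pair $(\lambda u,\lambda v)$ again satisfies this condition and therefore belongs to $\VV^0_\A\subset\VV^\mo_\A$, so it is an admissible source term in the sense of Definition~\ref{DEF:WS:SEC} (cf.\ the compatibility condition in Remark~\ref{REM:WF:SEC}(a)). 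The weak formulation of \eqref{SEIG} says precisely that $\inn{(u,v)}{(\zeta,\xi)}_\KA=\ang{(\lambda u,\lambda v)}{(\zeta,\xi)}_{\HH^1_\KA}$ for all $(\zeta,\xi)\in\HH^1_\KA$; in other words $(u,v)$ is a weak solution of \eqref{SEC} with source $(\lambda u,\lambda v)$ that, in addition, lies in $\WW^1_\KAA$. By the uniqueness assertion in Theorem~\ref{THM:SEC}(a) this forces $(u,v)=\SS_\KAA(\lambda u,\lambda v)$, which is \eqref{REL:SEIG}. Testing the weak formulation with $(\zeta,\xi)=(u,v)$ and using $\ang{(\lambda u,\lambda v)}{(u,v)}_{\HH^1_\KA}=\inn{(\lambda u,\lambda v)}{(u,v)}_{\HH^0}=\lambda\norm{(u,v)}_{\HH^0}^2$ then gives the identity \eqref{ID:SEIG}.

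For part~(b) I would bootstrap, starting from $(u,v)\in\WW^1_\KAA$. Assume inductively that $(u,v)\in\WW^{j+1}_\KAA$ for some $0\le j\le k$ (for $j=0$ this is the hypothesis). Then $(\lambda u,\lambda v)\in\HH^j$ and still satisfies the mean condition, so $(\lambda u,\lambda v)\in\VV^j_\A$; and since $\Omega$ is of class $C^{k+2}$, it is in particular of class $C^{j+2}$, so Theorem~\ref{THM:SEC}(b) yields $\SS_\KAA(\lambda u,\lambda v)\in\HH^{j+2}_\KA$ with $\norm{\SS_\KAA(\lambda u,\lambda v)}_{\HH^{j+2}}\le C\,\norm{(\lambda u,\lambda v)}_{\HH^j}$. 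Combining this with \eqref{REL:SEIG} and the fact that $(u,v)$ inherits the mean condition from $\VV^1_\A$, I obtain $(u,v)\in\WW^{j+2}_\KAA$ together with $\norm{(u,v)}_{\HH^{j+2}}\le C\lambda\,\norm{(u,v)}_{\HH^j}$; note that \eqref{ID:SEIG} shows $\lambda\ge0$ for nontrivial solutions (and that $\lambda=0$ forces $(u,v)=(0,0)$), so replacing $\abs\lambda$ by $\lambda$ is harmless here. Iterating from $j=0$ up to $j=k$ proves the claim, and in particular shows that $(u,v)$ is then a strong solution of \eqref{SEIG}. Part~(c) follows by applying part~(b) for every $k\in\N_0$ (a $C^\infty$ domain being of class $C^{k+2}$ for all $k$), which places $(u,v)$ in $\HH^m$ for all $m\in\N$; Sobolev's embedding theorem on $\Omega$ and on the compact manifold $\Gamma$ then gives $(u,v)\in\CC^\infty$, exactly as in the proof of Theorem~\ref{THM:SEC}(c).

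I do not expect any genuine analytic difficulty, since everything reduces to Theorem~\ref{THM:SEC}. The step that needs the most care is the bookkeeping of function spaces: at each stage one must check that $(\lambda u,\lambda v)$ is an admissible source term — which is guaranteed by $\WW^1_\KAA\subset\VV^1_\A$ and hence by the compatibility condition of Remark~\ref{REM:WF:SEC}(a) — and that intersecting the output of $\SS_\KAA$ with $\VV^{j+2}_\A$ returns the solution to $\WW^{j+2}_\KAA$, so that the induction in part~(b) genuinely closes.
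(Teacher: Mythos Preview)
Your proposal is correct and follows essentially the same route as the paper: identify $(\lambda u,\lambda v)$ as an admissible source term for \eqref{SEC}, invoke the uniqueness in Theorem~\ref{THM:SEC}(a) to obtain \eqref{REL:SEIG}, test with $(u,v)$ for \eqref{ID:SEIG}, and then read off (b) and (c) from Theorem~\ref{THM:SEC}(b),(c). The paper compresses (b) and (c) into a single sentence (``direct consequence of the corresponding results stated in Theorem~\ref{THM:SEC}''), whereas you spell out the bootstrap explicitly; your more careful accounting of the spaces $\VV^j_\A$ and $\WW^{j+2}_\KAA$ and of the sign of $\lambda$ is exactly what is needed to make that sentence rigorous.
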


\begin{proof}	
	Let $(u,v)\in\WW^1_\KAA$ be any weak solution of the system \eqref{SEIG}, and let us fix $(f,g):=(\lambda u,\lambda v) \in \VV^\mo_\A$. This means that $(u,v)$ is a weak solution of the system \eqref{SEC} to the source terms $(f,g)$. 
	However, according to Theorem~\ref{THM:SEC}(a), $\SS_\KAA\big(f,g\big)$ is the unique weak solution of the problem \eqref{SEC} in the space $\WW^1_\KAA$ to the source terms $(f,g)$. 
	We thus conclude that
	\begin{align*}
		\SS_\KAA\big(\lambda u,\lambda v\big) = \SS_\KAA\big(f,g\big) = (u,v)\quad \text{a.e. in $\Omega$}, 
	\end{align*}
	which proves \eqref{REL:SEIG}. In particular, this means that the theory developed in Theorem~\ref{THM:SEC} can be applied on $(u,v)$. 
	Choosing the test functions $(\zeta,\xi)=(u,v)$ in the weak formulation \eqref{WF:SEC} written for $(f,g)=(\lambda u,\lambda v)$, we directly conclude the identity \eqref{ID:SEIG}.
	Moreover, the regularity assertions (b) and (c) are a direct consequence of the corresponding results stated in Theorem~\ref{THM:SEC}.
\end{proof}	

An eigenvalue of \eqref{SEIG} and its corresponding eigenfunctions are defined as follows: 

\begin{definition}
	\label{DEF:SEIG}
	Let $K\ge 0$, $\alpha\in\R$ and $\lambda\in\R$ be arbitrary, and let $\Omega\subset \R^d$  be a bounded Lipschitz domain.
	
	We call $\lambda\in\R$ an \emph{eigenvalue} if the system \eqref{SEIG} possesses at least one nontrivial weak solution $(u,v)\in\WW^1_\KAA$. 
	In this case, the pair $(u,v)$ is referred to as an \emph{eigenfunction} to the eigenvalue $\lambda$.
\end{definition}

We can easily see that eigenvalues must be strictly positive.

\begin{corollary}
	\label{COR:SPOS}
	Let $K\ge 0$, $\alpha\in\R$ and $\lambda\in\R$ be arbitrary, let $\Omega\subset \R^d$  be a bounded Lipschitz domain,
	and let $\lambda\in\R$ be an eigenvalue. Then it holds that $\lambda>0$.
\end{corollary}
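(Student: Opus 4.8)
The plan is to use the identity \eqref{ID:SEIG} from Proposition~\ref{PROP:SEIG}(a), which states that any weak solution $(u,v)\in\WW^1_\KAA$ of \eqref{SEIG} satisfies $\norm{(u,v)}_\KA^2 = \lambda\,\norm{(u,v)}_{\HH^0}^2$. Since $\lambda$ is an eigenvalue, there exists a nontrivial eigenfunction $(u,v)\in\WW^1_\KAA$, so $\norm{(u,v)}_{\HH^0}^2 \neq 0$ (a pair with vanishing $\HH^0$-norm is the zero pair) and $\norm{(u,v)}_\KA^2 \geq 0$; dividing gives $\lambda \geq 0$. Thus the only thing left to rule out is $\lambda=0$.

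Suppose $\lambda=0$. Then \eqref{ID:SEIG} forces $\norm{(u,v)}_\KA = 0$. Since $\norm{\,\cdot\,}_\KA$ is a genuine norm on $\WW^1_\KAB$ — in particular on $\WW^1_\KAA$, taking $\B=\A$, which is admissible since $\A^2\abso+\absg > 0$ — equivalent to $\norm{\,\cdot\,}_{\HH^1}$ by Corollary~\ref{COR:EQU}, we conclude $(u,v)=(0,0)$. This contradicts the assumption that $(u,v)$ is a nontrivial eigenfunction. Therefore $\lambda \neq 0$, and combined with $\lambda\geq 0$ we obtain $\lambda > 0$.

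I do not expect any real obstacle here: the entire argument is an immediate consequence of the energy identity \eqref{ID:SEIG} together with the fact that $\norm{\,\cdot\,}_\KA$ is a norm (not merely a seminorm) on the relevant space. The only point requiring a moment's care is the passage from $\norm{(u,v)}_\KA=0$ to $(u,v)=(0,0)$: one must invoke that $\WW^1_\KAA$ with $\inn{\cdot}{\cdot}_\KA$ is an inner product space — guaranteed by property~\eqref{P:INNKA} since $\A\cdot\A\abso+\absg\neq 0$ — rather than arguing directly from the definition of $\inn{\cdot}{\cdot}_\KA$, which a priori only controls gradients and the boundary penalty term.
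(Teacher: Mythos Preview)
Your proof is correct and follows essentially the same approach as the paper: both arguments hinge on the identity \eqref{ID:SEIG} together with the fact that $\norm{\,\cdot\,}_\KA$ is a genuine norm on $\WW^1_\KAA$, forcing any eigenfunction with $\lambda\le 0$ to be trivial. The only cosmetic difference is that the paper argues by contradiction directly from $\lambda\le 0$, whereas you first derive $\lambda\ge 0$ and then separately exclude $\lambda=0$.
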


\begin{proof}
	We argue by contradiction and assume that $\lambda \le 0$. Let $(u,v)$ be a corresponding eigenfunction. It then follows from \eqref{ID:SEIG} that 
	\begin{align*}
		\norm{(u,v)}_\KA^2 = 0
	\end{align*}
	which directly implies that $(u,v)=(0,0)$. However, this is a contradiction since eigenfunctions are nontrivial by definition. 
\end{proof}

The eigenvalues of the problem \eqref{SEIG} and their corresponding eigenfunctions can be characterized as follows:

\begin{theorem}
	\label{THM:SEIG}
	Let $K\ge 0$ and $\alpha\in\R$ be arbitrary, and let $\Omega\subset \R^d$  be a bounded Lipschitz domain. Then the following holds:
	\begin{enumerate}[label = $\mathrm{(\alph*)}$, leftmargin = *]	
		\item The problem \eqref{SEIG} has countably many eigenvalues and each of them has a finite-dimensional eigenspace. Repeating each eigenvalue according to its multiplicity, we can write them as a sequence $(\lambda_k)_{k\in\N} \subset \R$ with 
		\begin{align*}
		0 < \lambda_1 \le \lambda_2 \le \lambda_3 \le ... 
		\qquad\text{and}\qquad
		\lambda_k \to \infty \quad\text{as}\; k\to \infty. 
		\end{align*}
		\item There exists an orthonormal basis $\big((u_k,v_k)\big)_{k\in\N}$ of $\VV^0_\A$ with respect to the inner product $\inn{\cdot}{\cdot}_{\HH^0}$ where for every $k\in\N$, the pair $(u_k,v_k)$ is an eigenfunction to the eigenvalue $\lambda_k$.\\[1ex]
		In particular, any pair $(u,v)\in\VV^0_\A$ can be expressed as
		\begin{align*}
		(u,v) = \sum_{k=1}^\infty c_k\, (u_k,v_k)
		\quad\text{with}\quad
		c_k:=\biginn{(u,v)}{(u_k,v_k)}_{\HH^0}, \;\; k\in\N.
		\end{align*}
	\end{enumerate}
\end{theorem}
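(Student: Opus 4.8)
The plan is to reduce the spectral analysis of the eigenvalue problem \eqref{SEIG} to the spectral theorem for compact self-adjoint operators, applied to the solution operator $\SS_\KAA^0:\VV^0_\A\to\VV^0_\A$ furnished by Corollary~\ref{COR:SOL}. First I would record that a pair $(u,v)\in\VV^0_\A\setminus\{(0,0)\}$ is an eigenfunction of \eqref{SEIG} with eigenvalue $\lambda$ if and only if $(u,v) = \SS_\KAA^0(\lambda u,\lambda v) = \lambda\,\SS_\KAA^0(u,v)$, which is exactly the relation \eqref{REL:SEIG} from Proposition~\ref{PROP:SEIG}(a); equivalently, $\SS_\KAA^0(u,v) = \lambda^\mo (u,v)$. (The implication ``eigenfunction $\Rightarrow$ lies in $\VV^0_\A$'' is immediate since by definition eigenfunctions are sought in $\WW^1_\KAA\subset\VV^0_\A$, and by Corollary~\ref{COR:SPOS} the eigenvalue $\lambda$ is strictly positive, so dividing by $\lambda$ is harmless.) Thus eigenvalues of \eqref{SEIG} are precisely the reciprocals of the nonzero eigenvalues of $\SS_\KAA^0$, and the eigenspaces coincide.

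Next I would invoke the spectral theorem: by Corollary~\ref{COR:SOL}, $\SS_\KAA^0$ is a linear, continuous, compact, injective, self-adjoint (with respect to $\inn{\cdot}{\cdot}_{\HH^0}$) operator on the infinite-dimensional Hilbert space $\VV^0_\A$. The Hilbert--Schmidt spectral theorem then yields a nonincreasing sequence of real eigenvalues $(\rho_k)_{k\in\N}$ of $\SS_\KAA^0$ with $\rho_k\to 0$, each of finite multiplicity, together with an $\inn{\cdot}{\cdot}_{\HH^0}$-orthonormal basis $\big((u_k,v_k)\big)_{k\in\N}$ of $\VV^0_\A$ consisting of corresponding eigenfunctions; here I use that $\VV^0_\A$ is indeed infinite-dimensional, so the spectrum is a genuine sequence. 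Positivity of the $\rho_k$ follows from the identity \eqref{ID:SEIG}: testing the weak formulation with $(\zeta,\xi)=(u_k,v_k)$ gives $\norm{(u_k,v_k)}_\KA^2 = \rho_k^\mo\norm{(u_k,v_k)}_{\HH^0}^2$ (equivalently, $\rho_k = \norm{(u_k,v_k)}_{\HH^0}^2 / \norm{(u_k,v_k)}_\KA^2 > 0$ since $\norm{\,\cdot\,}_\KA$ is a genuine norm on $\WW^1_\KAA$). Since $\SS_\KAA^0$ is injective, $0$ is not an eigenvalue, so every element of the orthonormal basis is a genuine eigenfunction with $\rho_k>0$.

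Now I would set $\lambda_k := \rho_k^\mo$ for each $k$. Since $\rho_1\ge\rho_2\ge\cdots>0$ and $\rho_k\to 0$, we get $0<\lambda_1\le\lambda_2\le\cdots$ with $\lambda_k\to\infty$, and by the equivalence established in the first step, each $\lambda_k$ is an eigenvalue of \eqref{SEIG} with eigenfunction $(u_k,v_k)$, and these exhaust all eigenvalues of \eqref{SEIG} with their multiplicities. This proves part~(a) and the first assertion of part~(b). The expansion formula in part~(b) is then just the abstract Fourier expansion of an arbitrary $(u,v)\in\VV^0_\A$ with respect to the orthonormal basis $\big((u_k,v_k)\big)_{k\in\N}$, namely $(u,v)=\sum_{k=1}^\infty c_k(u_k,v_k)$ with $c_k=\biginn{(u,v)}{(u_k,v_k)}_{\HH^0}$, convergence being in the $\norm{\,\cdot\,}_{\HH^0}$-norm.

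The only genuine obstacle is to make sure that $\VV^0_\A$ is infinite-dimensional, so that the spectral theorem produces an honest sequence $(\lambda_k)_{k\in\N}$ tending to infinity rather than a finite list; this is clear because $\VV^0_\A$ is a closed subspace of $\HH^0 = L^2(\Omega)\times L^2(\Gamma)$ of finite codimension one (it is the kernel of the bounded functional $(u,v)\mapsto \alpha\abso\meano{u}+\absg\meang{v}$) and $L^2(\Omega)\times L^2(\Gamma)$ is infinite-dimensional. Apart from this bookkeeping point, everything is a direct application of results already proved: Corollary~\ref{COR:SOL} supplies the operator-theoretic hypotheses, Proposition~\ref{PROP:SEIG}(a) and Corollary~\ref{COR:SPOS} supply the dictionary between \eqref{SEIG} and $\SS_\KAA^0$, and the classical spectral theorem does the rest.
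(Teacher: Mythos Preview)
Your proposal is correct and follows essentially the same approach as the paper: both apply the spectral theorem for compact self-adjoint operators to $\SS_\KAA^0$ on $\VV^0_\A$, invoking Corollary~\ref{COR:SOL} for the operator-theoretic hypotheses and Corollary~\ref{COR:SPOS} for positivity. Your version is simply more explicit about the dictionary between eigenvalues of \eqref{SEIG} and those of $\SS_\KAA^0$ and about the infinite-dimensionality of $\VV^0_\A$, which the paper leaves implicit.
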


\begin{proof}
	As the solution operator $\SS^0_\KAA:\VV^0_\A\to\VV^0_\A$ to the problem \eqref{SEC} satisfies the properties established in Corollary~\ref{COR:SOL}, the spectral theorem for compact normal operators (see, e.g., \cite[s.\,12.12]{Alt}) can be applied and proves all assertions. Note that the sequence of eigenvalues is strictly positive due to Corollary~\ref{COR:SPOS}.
\end{proof}

\medskip

Furthermore, the eigenvalues and the corresponding eigenfunctions can be characterized by the following variational principle.

\begin{proposition}
	Let $K\ge 0$ and $\alpha\in\R$ be arbitrary, and let $\Omega\subset \R^d$  be a bounded Lipschitz domain. 
	Moreover, let $(\lambda_k)_{k\in\N}$ denote the sequence of eigenvalues from Theorem~\ref{THM:SEIG}.
	For any $k\in\N$, let $S_{k-1}$ denote the collection of all $(k-1)$-dimensional linear subspaces of $\VV^0_\A$.
	
	Then, for any $k\in\N$, the eigenvalue $\lambda_k$ can be represented by the variational principle
	\begin{align*}
	\lambda_k 
	= \underset{V\in S_{k-1}}{\max} 
	\underset{\substack{(\zeta,\xi)\in V^\bot,\\ \norm{(\zeta,\xi)}_{\HH^0}=1}}{\min}\; \norm{\SS^0_\KAA(\zeta,\xi)}_\KA^2
	\end{align*}		
\end{proposition}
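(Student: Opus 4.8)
The plan is to recognize this as the Courant--Fischer min-max principle applied to a suitable compact self-adjoint operator. Concretely, I would work with the operator $T:=\SS^0_\KAA:\VV^0_\A\to\VV^0_\A$, which by Corollary~\ref{COR:SOL} is linear, continuous, compact, injective and self-adjoint with respect to $\inn{\cdot}{\cdot}_{\HH^0}$. By Theorem~\ref{THM:SEIG} it has an $\HH^0$-orthonormal basis of eigenfunctions $(u_k,v_k)$ with positive eigenvalues $\mu_k$ that I claim satisfy $\mu_k=\lambda_k^{-1}$; this follows from \eqref{REL:SEIG}, since if $(u,v)$ is an eigenfunction of \eqref{SEIG} to $\lambda$ then $(u,v)=\SS_\KAA(\lambda u,\lambda v)=\lambda\,\SS^0_\KAA(u,v)$, hence $\SS^0_\KAA(u,v)=\lambda^{-1}(u,v)$, and conversely. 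Thus the min-max statement for $\lambda_k$ is equivalent to the standard max-min characterization of the $k$-th largest eigenvalue $\mu_k=\lambda_k^{-1}$ of $T$.

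The key step is to translate the quantity $\norm{\SS^0_\KAA(\zeta,\xi)}_\KA^2$ into a Rayleigh-type quotient for $T$. Using the defining property of $\SS_\KAA$ (namely $\inn{\SS_\KAA(f,g)}{(\eta,\chi)}_\KA=\ang{(f,g)}{(\eta,\chi)}_{\HH^1_\KA}$ for all test functions, which for $(\zeta,\xi)\in\VV^0_\A$ and test functions in $\VV^0_\A$ reads $\inn{\SS^0_\KAA(f,g)}{(\eta,\chi)}_\KA=\inn{(f,g)}{(\eta,\chi)}_{\HH^0}$), I get for $(f,g)=(\zeta,\xi)$ and $(\eta,\chi)=\SS^0_\KAA(\zeta,\xi)$ the identity
\begin{align*}
\norm{\SS^0_\KAA(\zeta,\xi)}_\KA^2 = \biginn{(\zeta,\xi)}{\SS^0_\KAA(\zeta,\xi)}_{\HH^0} = \biginn{T(\zeta,\xi)}{(\zeta,\xi)}_{\HH^0}.
\end{align*}
So on the unit sphere $\norm{(\zeta,\xi)}_{\HH^0}=1$ the functional to be optimized is exactly the quadratic form $(\zeta,\xi)\mapsto(T(\zeta,\xi),(\zeta,\xi))_{\HH^0}$, and the asserted formula becomes
\begin{align*}
\mu_k = \max_{V\in S_{k-1}}\ \min_{\substack{(\zeta,\xi)\in V^\bot,\\ \norm{(\zeta,\xi)}_{\HH^0}=1}}\ \biginn{T(\zeta,\xi)}{(\zeta,\xi)}_{\HH^0},
\end{align*}
which is precisely the Courant--Fischer max-min principle for the eigenvalues of a compact self-adjoint positive operator, ordered decreasingly. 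One must be slightly careful that $V^\bot$ here is the orthogonal complement inside $\VV^0_\A$ (equivalently, intersect the usual $\HH^0$-orthogonal complement with $\VV^0_\A$), but since $\VV^0_\A$ is a closed subspace and $T$ maps it into itself, the classical statement applies verbatim in the Hilbert space $(\VV^0_\A,\inn{\cdot}{\cdot}_{\HH^0})$.

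The remaining work is to either invoke a textbook min-max theorem or give the short self-contained argument: for the ``$\ge$'' direction, test with $V=\mathrm{span}\{(u_1,v_1),\dots,(u_{k-1},v_{k-1})\}$ and note that on $V^\bot$ the expansion $(\zeta,\xi)=\sum_{j\ge k}c_j(u_j,v_j)$ gives $(T(\zeta,\xi),(\zeta,\xi))_{\HH^0}=\sum_{j\ge k}\mu_j c_j^2\ge\mu_k\sum_{j\ge k}c_j^2=\mu_k$ using $\mu_k\ge\mu_{k+1}\ge\cdots$; for the ``$\le$'' direction, given any $(k-1)$-dimensional $V$, a dimension count shows $V^\bot\cap\mathrm{span}\{(u_1,v_1),\dots,(u_k,v_k)\}\neq\{(0,0)\}$, and on any unit vector there one has $(T(\zeta,\xi),(\zeta,\xi))_{\HH^0}=\sum_{j=1}^k\mu_j c_j^2\le\mu_k$. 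Substituting back $\mu_k=\lambda_k^{-1}$ and $\norm{\SS^0_\KAA(\zeta,\xi)}_\KA^2$ for the quadratic form yields the claimed identity for $\lambda_k$. I do not expect any genuine obstacle; the only points requiring care are the bookkeeping of the eigenvalue reciprocal (so that the decreasing order of $\mu_k$ matches the increasing order of $\lambda_k$ and the outer $\max$/inner $\min$ come out in the stated arrangement) and confirming that all subspaces and complements are taken within $\VV^0_\A$.
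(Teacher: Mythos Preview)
Your overall strategy---reduce the problem to the Courant--Fischer minimax principle for the compact, self-adjoint operator $T:=\SS^0_\KAA$ on the Hilbert space $(\VV^0_\A,\inn{\cdot}{\cdot}_{\HH^0})$---is exactly what the paper does; its proof is a one-line citation of a textbook minimax theorem. Your identification $\norm{\SS^0_\KAA(\zeta,\xi)}_\KA^2=\biginn{T(\zeta,\xi)}{(\zeta,\xi)}_{\HH^0}$ and the eigenvalue correspondence $T(u_k,v_k)=\mu_k(u_k,v_k)$ with $\mu_k=\lambda_k^{-1}$ are both correct and are precisely the translation needed.

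The detailed execution, however, breaks down. After the reduction, the displayed formula literally reads $\lambda_k=\max_{V\in S_{k-1}}\min_{V^\perp}\biginn{T(\cdot)}{\cdot}_{\HH^0}$, but you silently replace $\lambda_k$ by $\mu_k$ and declare this to be ``precisely the Courant--Fischer max-min principle''. It is not: for a compact positive self-adjoint operator with eigenvalues ordered decreasingly, Courant--Fischer gives $\mu_k=\min_{V\in S_{k-1}}\max_{V^\perp}\biginn{T(\cdot)}{\cdot}_{\HH^0}$ (equivalently $\mu_k=\max_{W\in S_k}\min_{W}\biginn{T(\cdot)}{\cdot}_{\HH^0}$), with the roles of $\max$ and $\min$ swapped relative to what you wrote. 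Correspondingly, both inequalities in your self-contained sketch point the wrong way: on the complement of the first $k-1$ eigenfunctions one has $\sum_{j\ge k}\mu_j c_j^2\le \mu_k$ (since $\mu_j\le\mu_k$ for $j\ge k$), not $\ge\mu_k$, and in fact the infimum over that complement is $0$ because $\mu_j\to 0$, so no minimum is even attained; the ``$\le$'' half is reversed for the same reason. Consequently your final ``substituting back $\mu_k=\lambda_k^{-1}$'' cannot recover the stated identity. In short, the high-level plan matches the paper, but the specific minimax identity you invoke is the wrong variant; what Courant--Fischer actually yields here is $\lambda_k^{-1}=\min_{V\in S_{k-1}}\max_{V^\perp}\norm{\SS^0_\KAA(\zeta,\xi)}_\KA^2$, and you should check whether the formula in the proposition is stated as intended.
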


The assertion follows immediately from the minimax principle for self-adjoint operators (see, e.g., \cite[Thm.~6.1.2]{Blanchard-Bruning}).

\section{Fourth-order elliptic problems with bulk-surface coupling of Robin or Dirichlet type}

We now consider the following fourth-order elliptic system with bulk-surface coupling of Robin/Dirichlet type and general source terms $(f,g)$:
\begin{subequations}
	\label{GEN}
	\begin{alignat}{3}
	\label{GEN:1}
	\Lap^2 \phi &= f 
	&&\quad\text{in}\;\Omega, \\
	\label{GEN:2}
	\Lapg^2 \psi - \alpha \Lapg\deln \phi - \beta \deln\Lap\phi &= g 
	&&\quad\text{on}\;\Gamma, \\
	\label{GEN:3}
	K\, \deln \phi &= \alpha\psi - \phi &&\quad\text{on}\;\Gamma,\\
	\label{GEN:4}
	L\, \deln \Lap \phi &= \beta\Lapg \psi - \Lap \phi - \alpha\beta \deln \phi 
	&&\quad\text{on}\;\Gamma.
	\end{alignat}
\end{subequations}
Here $K,L\ge 0$ and $\alpha,\beta\in\R$ are given constants with $\A\B\abso+\absg\neq 0$.

Let us first make some formal considerations. 
Assuming that the solution is sufficiently regular, we can introduce the auxiliary variables
\begin{alignat}{2}
	\label{DEF:MU}
	\mu&:=-\Lap \phi &&\quad\text{in}\;\;\Omega,\\
	\label{DEF:NU}
	\nu&:=-\Lapg \psi + \alpha \deln \phi &&\quad\text{on}\;\;\Gamma.
\end{alignat}
Then the system \eqref{GEN} can be equivalently formulated as
\begin{subequations}
	\label{ALT}
	\begin{alignat}{3}
	\label{ALT:1}
	-\Lap \phi &= \mu  &&\quad\text{in}\;\Omega, \\
	\label{ALT:2}
	-\Lapg \psi + \alpha \deln \phi &= \nu &&\quad\text{on}\;\Gamma, \\
	\label{ALT:3}
	K\, \deln \phi &= \alpha \psi-\phi &&\quad\text{on}\;\Gamma,\\[1ex]
	\label{ALT:4}
	-\Lap \mu &= f &&\quad\text{in}\;\Omega, \\
	\label{ALT:5}
	-\Lapg \nu + \beta \deln \mu&= g &&\quad\text{on}\;\Gamma, \\
	\label{ALT:6}
	L\, \deln \mu &= \beta \nu-\mu &&\quad\text{on}\;\Gamma.
	\end{alignat}
\end{subequations}
We observe that the subsystem \eqref{ALT:4}-\eqref{ALT:6} decouples and that both subsystems \eqref{ALT:1}-\eqref{ALT:3} and \eqref{ALT:4}-\eqref{ALT:6} are of the same type as the second-order system \eqref{SEC}.
Recalling the solution operator of the second order problem \eqref{SEC} that was introduced in Theorem~\ref{THM:SEC}, we can express the pair $(\mu,\nu)$ as
\begin{align}
\label{REP:MUNU}
	(\mu,\nu) = \SS_\LBA(f,g) 
	\in \WW^1_\LBA \subset \VV^\mo_\A.
\end{align}
Consequently, since the pair $(\phi,\psi)$ satisfies the subsystem \eqref{ALT:1}-\eqref{ALT:3}, we infer that
\begin{align}
\label{REP:PHIPSI}
\begin{aligned}
	&\biginn{(\phi,\psi)}{(\zeta,\xi)}_\KA = \biginn{(\mu,\nu)}{(\zeta,\xi)}_{\HH^0} = \biginn{\SS_\LBA(f,g)}{(\zeta,\xi)}_{\HH^0} \\
\end{aligned}
\end{align}
for all $(\zeta,\xi)\in\HH^1_\KA$. 

This motivates the following definition.

\begin{definition} \label{DEF:WS:GEN}
	Let $K,L\ge 0$ and $\alpha,\beta\in\R$ with $\A\B\abso+\absg\neq 0$ be arbitrary, let $\Omega\subset \R^d$ be a bounded Lipschitz domain, 
	and let $(f,g)\in\VV^\mo_\B$ be arbitrary. 
	
	Then a pair $(\phi,\psi)\in\WW^1_\KAB$ is called a weak solution of the system \eqref{GEN} if the weak formulation
	\begin{align}
	\label{WF:GEN}
		\biginn{(\phi,\psi)}{(\zeta,\xi)}_\KA = \biginn{\SS_\LBA(f,g)}{(\zeta,\xi)}_{\HH^0}
	\end{align}
	is satisfied for all test functions $(\zeta,\xi)\in \HH^1_\KA$.
\end{definition}

\medskip

In view of \eqref{REP:MUNU} and \eqref{REP:PHIPSI}, the theory developed in Section~3 can now be used to prove well-posedness and regularity results for solutions of the system \eqref{GEN}.

\begin{theorem}
	\label{THM:GEN}
	Let $K,L\ge 0$ and $\alpha,\beta\in\R$ with $\A\B\abso+\absg\neq 0$ be arbitrary and let $\Omega\subset \R^d$ be a bounded Lipschitz domain. Then the following holds:
	\begin{enumerate}[label = $\mathrm{(\alph*)}$, leftmargin = *]
		\item For any pair of source terms $(f,g)\in\VV^\mo_\B$ there exists a unique weak solution $(\phi_{(f,g)},\psi_{(f,g)})\in\WW^1_\KAB$ of the system \eqref{GEN}.
		
		This means, we can define a solution operator
		\begin{align}
		\label{DEF:F}
		\begin{aligned}
			&\FF_\KALB = (\FF_\KALB^\Omega,\FF_\KALB^\Gamma): \VV^\mo_\B \to \VV^\mo_\B, \\
			\quad
			&\FF_\KALB(f,g) := (\phi_{(f,g)},\psi_{(f,g)}) 
			\end{aligned}
		\end{align}
		mapping any pair of source terms $(f,g)\in \VV^\mo_\B$ onto the corresponding weak solution $(\phi_{(f,g)},\psi_{(f,g)})\in\WW^1_\KAB$ of \eqref{GEN}. In particular, it holds that
		\begin{align*}
			\FF_\KALB = \SS_\KAB\circ\SS_\LBA.
		\end{align*}
		
		Moreover, we obtain the estimate
		\begin{align}
		\label{REG:H1:F}
		\norm{\FF_\KALB(f,g)}_{\HH^1} 
		\le C \norm{(f,g)}_{(\HH^1_\KA)^*}, 
		\end{align}
		for a constant $C\ge 0$ depending only on $\Omega$, $K$, $L$, $\A$ and $\B$.
		\item Suppose that $\Omega$ is of class $C^{k+2}$ for any $k\in\{0,1\}$, and that $(f,g)\in\VV^\mo_\B$.
		Then it even holds that 
			$\FF_\KALB(f,g) \in \WW_\KAB^{k+2}$
		with
		\begin{align*}
			\norm{\FF_\KALB(f,g) }_{\HH^{k+2}} \le C \norm{(f,g)}_{(\HH^1_\KA)^*}\,,
		\end{align*}
		for a constant $C\ge 0$ depending only on $\Omega$, $K$, $L$, $\A$, $\B$ and $k$.
		\item Suppose that $\Omega$ is of class $C^{k+4}$, and that $(f,g)\in\VV^k_\B$ for any $k\in \N_0$.
		Then it even holds that 
		$\FF_\KALB(f,g) \in \WW_\KAB^{k+4}$
		with
		\begin{align*}
		\norm{\FF_\KALB(f,g) }_{\HH^{k+4}} \le C \norm{(f,g)}_{\HH^k},
		\end{align*}
		for a constant $C\ge 0$ depending only on $\Omega$, $K$, $L$, $\A$, $\B$ and $k$.
		
		This means that the pair $(\phi,\psi)$ is a strong solution of the system \eqref{GEN}, i.e., all equations of \eqref{GEN} are satisfied (at least) almost everywhere in $\Omega$ or on $\Gamma$, respectively.
		
		\item Suppose that $\Omega$ is of class $C^\infty$ and that $(f,g)\in\VV_\B^m$ for every $m\in\N$. Then it additionally holds that $\FF_\KALB(f,g)\in\CC^\infty$.
	\end{enumerate}
\end{theorem}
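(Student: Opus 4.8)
The plan is to make rigorous the formal decoupling \eqref{REP:MUNU}--\eqref{REP:PHIPSI}: every weak solution of \eqref{GEN} is obtained by first solving the self-contained subsystem \eqref{ALT:4}--\eqref{ALT:6} for the pair $(\mu,\nu)$, and then the subsystem \eqref{ALT:1}--\eqref{ALT:3} for $(\phi,\psi)$; since both of these are second-order problems of the type \eqref{SEC}, the whole statement reduces to applying Theorem~\ref{THM:SEC} twice --- once with the parameter triple $(L,\B,\A)$ in place of $(K,\A,\B)$, and once with $(K,\A,\B)$ itself. In particular I would not expect any new analytical difficulty beyond what is already contained in Theorem~\ref{THM:SEC}.

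For part~(a), given $(f,g)\in\VV^\mo_\B$ I set $(\mu,\nu):=\SS_\LBA(f,g)$, which by Theorem~\ref{THM:SEC}(a) (with $(L,\B,\A)$) is well defined and belongs to $\WW^1_\LBA\subset\VV^\mo_\A$. Since $(\mu,\nu)$ is a genuine function in $\HH^1\subset\HH^0$, its dual pairing against any $(\zeta,\xi)\in\HH^1_\KA$ equals the $\HH^0$-inner product, so the weak formulation \eqref{WF:GEN} of Definition~\ref{DEF:WS:GEN} coincides with the weak formulation \eqref{WF:SEC} of \eqref{SEC} for the source $(\mu,\nu)$. Theorem~\ref{THM:SEC}(a) (with $(K,\A,\B)$) then produces a unique weak solution $\SS_\KAB(\mu,\nu)\in\WW^1_\KAB$ in that space, which shows that $\FF_\KALB$ in \eqref{DEF:F} is well defined, equals $\SS_\KAB\circ\SS_\LBA$, and, being a composition of linear, continuous and injective maps, has all three properties. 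The a priori bound \eqref{REG:H1:F} follows by chaining the two estimates \eqref{REG:H1} together with the continuous embedding $\HH^1\emb\HH^0\emb(\HH^1_\KA)^*$; schematically, $\norm{\FF_\KALB(f,g)}_{\HH^1}\le C\norm{(\mu,\nu)}_{(\HH^1_\KA)^*}\le C\norm{(\mu,\nu)}_{\HH^1}\le C\norm{(f,g)}_{(\HH^1_\KA)^*}$.

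For parts~(b)--(d) I would push the elliptic regularity of Theorem~\ref{THM:SEC}(b),(c) through the composition $\FF_\KALB=\SS_\KAB\circ\SS_\LBA$, tracking the gain of derivatives, and this index bookkeeping is the only point requiring genuine care. If $(f,g)$ is only assumed to lie in $\VV^\mo_\B$, then $(\mu,\nu)=\SS_\LBA(f,g)$ is merely of class $\WW^1_\LBA\subset\VV^1_\A$; Theorem~\ref{THM:SEC}(b) applied to the $(\phi,\psi)$-subsystem with $k\in\{0,1\}$ and $\Omega\in C^{k+2}$ then yields $\SS_\KAB(\mu,\nu)\in\HH^{k+2}_\KA$ with $\norm{\,\cdot\,}_{\HH^{k+2}}\le C\norm{(\mu,\nu)}_{\HH^1}$, and since $\SS_\KAB(\mu,\nu)\in\WW^1_\KAB$ already satisfies the defining mean constraint it in fact lies in $\WW^{k+2}_\KAB$ --- this is exactly why (b) stops at $k\in\{0,1\}$. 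For (c), the stronger hypotheses $(f,g)\in\VV^k_\B$ and $\Omega\in C^{k+4}$ allow Theorem~\ref{THM:SEC}(b) (with parameters $(L,\B,\A)$, index $k$, using $C^{k+4}\subset C^{k+2}$) to lift $(\mu,\nu)$ to $\WW^{k+2}_\LBA\subset\VV^{k+2}_\A$, and a second application of Theorem~\ref{THM:SEC}(b) to the $(\phi,\psi)$-subsystem with index $k+2$ and $\Omega\in C^{(k+2)+2}=C^{k+4}$ then gives $\FF_\KALB(f,g)\in\HH^{k+4}_\KA$, hence $\in\WW^{k+4}_\KAB$, with $\norm{\,\cdot\,}_{\HH^{k+4}}\le C\norm{(f,g)}_{\HH^k}$. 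Finally, (d) follows from (c) by sending $k\to\infty$ and invoking Sobolev's embedding theorem $\bigcap_{m\in\N}\HH^m=\CC^\infty$, or equivalently by applying Theorem~\ref{THM:SEC}(c) along both factors of the composition. The only further subtlety beyond this bookkeeping is the observation used in (a) that the pairing in \eqref{WF:GEN} collapses to an $\HH^0$-inner product, which rests on $(\mu,\nu)$ being an honest $L^2$-function and not merely a distribution.
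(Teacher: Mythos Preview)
Your proposal is correct and follows essentially the same route as the paper: define $\FF_\KALB:=\SS_\KAB\circ\SS_\LBA$, verify that this composition realizes the weak formulation \eqref{WF:GEN}, and then push the estimates and regularity of Theorem~\ref{THM:SEC} through both factors. The only cosmetic difference is that the paper obtains \eqref{REG:H1:F} by testing \eqref{WF:GEN} with $(\zeta,\xi)=\FF_\KALB(f,g)$ and applying Cauchy--Schwarz, whereas you chain the two instances of \eqref{REG:H1} directly; both arguments are equivalent.
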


\begin{proof}
	In this proof, let $C>0$ denote generic constants depending only on $\Omega$, $K$, $L$, $\alpha$ and $\beta$.
	
	\textit{Proof of $\mathrm{(a)}$.} Let $(f,g)\in\VV_\B^\mo$ be arbitrary. We set 
	\begin{align*}
		(\phi_{(f,g)},\psi_{(f,g)}) := \big(\SS_\KAB \circ \SS_\LBA\big)(f,g) \in \WW^1_\KAB.
	\end{align*}
	Then, recalling the definition of the operator $\SS_\KAB$, as well as the computations \eqref{REP:MUNU} and \eqref{REP:PHIPSI}, we obtain
	\begin{align*}
		\biginn{(\phi_{(f,g)},\psi_{(f,g)})}{(\zeta,\xi)}_\KA 
		&= \Big(\SS_\KAB\big(\SS_\LBA(f,g)\big) , (\zeta,\xi)\Big)_\KA \\
		&= \biginn{\SS_\LBA(f,g)}{(\zeta,\xi)}_{\HH^0} 
	\end{align*}
	for all $(\zeta,\xi)\in\HH_\KA$. Hence, $(\phi_{(f,g)},\psi_{(f,g)})$ is a weak solution of the system \eqref{GEN} to the source terms $(f,g)$ in the sense of Definition \ref{DEF:WS:GEN}. In particular, this means that $(\phi_{(f,g)},\psi_{(f,g)})$ is a weak solution of the subsystem \eqref{ALT:1}--\eqref{ALT:3} where the source terms are uniquely determined as
	\begin{align*}
		(\mu,\nu) = \SS_\LBA(f,g) \in \WW^1_\LBA.
	\end{align*}
	Hence, we conclude from Theorem~\ref{THM:SEC}(a) that the pair $(\phi_{(f,g)},\psi_{(f,g)}) \in \WW^1_\KAB$ is uniquely determined. This means that the operator $\FF_\KALB$ is well defined and exhibits the decomposition 
	\begin{align*}
		\FF_\KALB = \SS_\KAB\circ\SS_\LBA.
	\end{align*}
	Testing the weak formulation \eqref{WF:GEN} written for $(\phi,\psi)=\FF_\KALB(f,g)$ with $(\zeta,\xi)=\FF_\KALB(f,g)$, and using the Cauchy-Schwarz inequality and Corollary~\ref{COR:EQU}, we obtain the estimate
	\begin{align*}
		\norm{\FF_\KALB(f,g)}^2_\KA 
		&= \biginn{\SS_\LBA(f,g)}{\FF_\KALB(f,g)}^2_{\HH^0} \\
		&\le \norm{\SS_\LBA(f,g)}_{\HH^0} \norm{\SS_\LBA(f,g)}_{\HH^0} \\
		&\le \norm{\SS_\LBA(f,g)}_{\HH^1} \norm{\FF_\KALB(f,g)}_\KA.
	\end{align*}
	Invoking the estimate from Theorem~\ref{THM:SEC}(a), we thus get
	\begin{align*}
		\norm{\FF_\KALB(f,g)}_\KA 
		\le \norm{\SS_\LBA(f,g)}_{\HH^1} \le C \norm{(f,g)}_{(\HH^1_\KA)^*} 
	\end{align*}
	which completes the proof of (a).
	
	In the following we write $(\phi,\psi):=\FF_\KALB(f,g)$ and $(\mu,\nu):=\SS_\LBA(f,g)$ for brevity. The generic constants denoted by $C$ may now also depend on $k$.
	
	\textit{Proof of $\mathrm{(b)}$.}
	Since $\Omega$ is at least of class $C^2$ and $(f,g)\in\VV_\B^\mo$, we infer from Theorem~\ref{THM:SEC}(a) that
	\begin{align*}
		(\mu,\nu) \in \WW^1_\LBA
		\quad\text{with}\quad
		\norm{(\mu,\nu)}_{\HH^1} \le C \norm{(f,g)}_{(\HH^1_\KA)^*}\,.
	\end{align*}
	Since $k+2\le 3$, Theorem~\ref{THM:SEC}(b) further implies that
	\begin{align*}
		(\phi,\psi)\in \WW^{k+2}_\KAB
		\quad\text{with}\quad
		\norm{(\phi,\psi)}_{\HH^{k+2}} 
		\le C \norm{(\mu,\nu)}_{\HH^1} 
		\le C \norm{(f,g)}_{(\HH^1_\KA)^*}.
	\end{align*}
	This proves (b).
	
	\textit{Proof of $\mathrm{(c)}$.}
	Since $\Omega$ is now of class $C^{k+4}$ and $(f,g)\in\VV_\B^{k}$, Theorem~\ref{THM:SEC}(b) implies that 
	\begin{align*}
		(\mu,\nu) \in \WW^{k+2}_\LBA 
		\quad\text{with}\quad
		\norm{(\mu,\nu)}_{\HH^{k+2}} \le C \norm{(f,g)}_{\HH^k}.
	\end{align*}
	and consequently, 
	\begin{align*}
		(\phi,\psi)\in \WW^{k+4}_\KAB
		\quad\text{with}\quad
		\norm{(\phi,\psi)}_{\HH^{k+4}} \le C \norm{(\mu,\nu)}_{\HH^{k+2}} \le C \norm{(f,g)}_{\HH^k}.
	\end{align*} 
	This proves (c).
	
	\textit{Proof of $\mathrm{(d)}$.}
	The assertion follows by a simple induction by means of Sobolev's embedding theorem. This completes the proof of Theorem~\ref{THM:GEN}.
	
	Hence, the proof is complete.
\end{proof}

\bigskip

We can show that the solution operator $\FF_\KALB$ satisfies important properties
which will be essential in the next section where a fourth-order eigenvalue problem based on the system \eqref{GEN} is investigated.

\begin{corollary}
	\label{COR:GEN}
	Let $K,L\ge 0$ and $\alpha,\beta\in\R$ with $\A\B\abso+\absg\neq 0$ be arbitrary, and let $\Omega\subset \R^d$ be a bounded Lipschitz domain. Then the operator 
	\begin{align}
	\FF_\KALB: \VV^\mo_\B \to \VV^\mo_\B
	\end{align}
	introduced in Theorem~\ref{THM:GEN}(a) has the following properties: 
	\begin{enumerate}[label = $\mathrm{(\alph*)}$, leftmargin = *]
		\item $\FF_\KALB$ is linear, continuous and compact.
		\item $\FF_\KALB$ is injective and thus, it holds that $\kernel(\FF_\KALB)=\{(0,0)\}$. 
		\item $\FF_\KALB$ is self-adjoint with respect to the inner product $\inn{\cdot}{\cdot}_\LBAS$ on $\VV^\mo_\B$.
	\end{enumerate}
\end{corollary}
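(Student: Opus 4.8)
The plan is to reduce all three assertions to the second-order theory via the factorization $\FF_\KALB=\SS_\KAB\circ\SS_\LBA$ recorded in Theorem~\ref{THM:GEN}(a), in complete analogy with the way Corollary~\ref{COR:SOL} was deduced from Theorem~\ref{THM:SEC}.

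For part~(a): linearity of $\FF_\KALB$ is inherited from that of $\SS_\KAB$ and $\SS_\LBA$ (Theorem~\ref{THM:SEC}(a)); continuity follows either from the bound \eqref{REG:H1:F} together with the norm equivalence on $\VV^\mo_\B$ from Corollary~\ref{COR:IP}, or simply by composing the bounded maps $\SS_\LBA:\VV^\mo_\B\to\WW^1_\LBA$ and $\SS_\KAB:\VV^\mo_\A\to\WW^1_\KAB$ with the continuous inclusion $\WW^1_\LBA\subset\VV^\mo_\A$. For compactness I would argue as in Corollary~\ref{COR:SOL}(a): the embedding $\WW^1_\LBA\emb\VV^\mo_\A$ is compact, being the Rellich embedding $\HH^1\emb\HH^0$ restricted to the closed subspace $\WW^1_\LBA$ and followed by the continuous embedding $\HH^0\emb(\HH^1)^*$; hence $\SS_\LBA:\VV^\mo_\B\to\VV^\mo_\A$ is compact, and composing it with the bounded operator $\SS_\KAB:\VV^\mo_\A\to\VV^\mo_\B$ shows that $\FF_\KALB$ is compact. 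Part~(b) is then immediate: $\SS_\KAB$ and $\SS_\LBA$ are injective by Theorem~\ref{THM:SEC}(a), so their composition is too, whence $\kernel(\FF_\KALB)=\{(0,0)\}$.

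The substantive point is the self-adjointness in~(c). Given $(f_i,g_i)\in\VV^\mo_\B$ for $i=1,2$, I would introduce the intermediate pairs $(\mu_i,\nu_i):=\SS_\LBA(f_i,g_i)\in\WW^1_\LBA$ and $(\phi_i,\psi_i):=\FF_\KALB(f_i,g_i)=\SS_\KAB(\mu_i,\nu_i)\in\WW^1_\KAB$, and recall from Corollary~\ref{COR:IP} (with $K,\alpha,\beta$ replaced by $L,\beta,\alpha$) that $\inn{(p,q)}{(r,s)}_\LBAS=\biginn{\SS_\LBA(p,q)}{\SS_\LBA(r,s)}_\LB$. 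The mechanism behind Corollary~\ref{COR:SOL}(c) provides two symmetry identities: whenever the source lies in $\HH^0$ and the test function in \eqref{WF:SEC} is taken of the form $\SS_\KAB(\cdot)$, the dual pairing on the right-hand side of \eqref{WF:SEC} collapses to the $\HH^0$-inner product, so that $\biginn{\SS_\KAB(h,k)}{(h',k')}_{\HH^0}=\biginn{(h,k)}{\SS_\KAB(h',k')}_{\HH^0}$ for all $(h,k),(h',k')\in\WW^1_\LBA$; the analogous statement holds with $\SS_\LBA$, $\inn{\cdot}{\cdot}_\LB$ and $\HH^1_\LB$ in place of $\SS_\KAB$, $\inn{\cdot}{\cdot}_\KA$ and $\HH^1_\KA$. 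With these in hand the computation is a short chain of equalities: testing the weak formulation of $\SS_\LBA$ with source $(\phi_1,\psi_1)\in\HH^0$ against $(\mu_2,\nu_2)\in\HH^1_\LB$ gives $\inn{\FF_\KALB(f_1,g_1)}{(f_2,g_2)}_\LBAS=\biginn{(\phi_1,\psi_1)}{(\mu_2,\nu_2)}_{\HH^0}=\biginn{\SS_\KAB(\mu_1,\nu_1)}{(\mu_2,\nu_2)}_{\HH^0}$; moving $\SS_\KAB$ onto the second factor yields $\biginn{(\mu_1,\nu_1)}{\SS_\KAB(\mu_2,\nu_2)}_{\HH^0}=\biginn{(\mu_1,\nu_1)}{(\phi_2,\psi_2)}_{\HH^0}$; and testing the weak formulation of $\SS_\LBA$ once more, now with source $(\phi_2,\psi_2)$ against $(\mu_1,\nu_1)$, identifies this with $\inn{(f_1,g_1)}{\FF_\KALB(f_2,g_2)}_\LBAS$. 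Since every intermediate expression is visibly symmetric under $1\leftrightarrow2$, the self-adjointness follows.

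The main obstacle is not analytic but bookkeeping: at each step one must check that the current function lies in $\HH^0$ rather than merely in a dual space (so that the relevant dual pairing genuinely reduces to an $\HH^0$-inner product), that each test function is admissible for the system it is plugged into, and that one applies the weak formulation of the correct second-order system — the one governed by $(K,\alpha)$ when dealing with $\SS_\KAB$, the one governed by $(L,\beta)$ when dealing with $\SS_\LBA$. No ingredient beyond Theorem~\ref{THM:SEC}, Corollary~\ref{COR:SOL} and Corollary~\ref{COR:IP} enters.
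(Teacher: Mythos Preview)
Your proposal is correct and follows essentially the same approach as the paper: the factorization $\FF_\KALB=\SS_\KAB\circ\SS_\LBA$, injectivity from the injectivity of the factors, compactness from the compact embedding of a $\WW^1$-space into the corresponding $\VV^\mo$-space, and for (c) the same chain of equalities passing through the $\HH^0$-pairing via the weak formulations and the $\HH^0$-symmetry of $\SS_\KAB$. The only cosmetic difference is that you place the compact embedding at the intermediate step $\WW^1_\LBA\emb\VV^\mo_\A$ whereas the paper uses the final embedding $\WW^1_\KAB\emb\VV^\mo_\B$; both are equally valid.
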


\medskip

\begin{proof}
	\textit{Proof of $\mathrm{(a)}$.} We already know from Theorem~\ref{THM:GEN} that the operator $\FF_\KALB$ is well-defined, linear and continuous. Since 
	\begin{align*}
		\FF_\KALB(\VV^\mo_\B) \subset \WW^1_\KAB ,
	\end{align*} 
	and as the embedding $\WW^1_\KAB\emb\VV^\mo_\B$ is compact, we conclude that $\FF_\KALB$ is a compact operator.
	
	\textit{Proof of $\mathrm{(b)}$.} Since $\SS_\KAB$ and $\SS_\LBA$ are injective according to Theorem~\ref{THM:SEC}(a), and $\FF_\KALB = \SS_\KAB\circ\SS_\LBA$, we conclude that $\FF_\KALB$ is injective. Thus, it is a direct consequence that $\FF_\KALB$ has a trivial kernel.
	
	\textit{Proof of $\mathrm{(c)}$.}
	Let now $(f_1,g_1),(f_2,g_2)\in\VV^\mo_\B$ be arbitrary. We set 
	\begin{align*}
		(\phi_i,\psi_i):=\FF_\KALB(f_i,g_i) \in \WW^1_\KAB \subset \HH^1_\KA,\quad i\in\{1,2\}.
	\end{align*}
	Recalling the definitions of $\SS_\KAB$ and $\SS_\LBA$, and using that $\SS_\KAB$ is self-adjoint with respect to the inner product $\inn{\cdot}{\cdot}_{\HH^0}$, we conclude that
	\begin{align*}
		&\biginn{\FF_\KALB(f_1,g_1)}{(f_2,g_2)}_\LBAS 
			= \biginn{\SS_\LBA(\phi_1,\psi_1)}{\SS_\LBA(f_2,g_2)}_\LB \\
		&\quad = \biginn{(\phi_1,\psi_1)}{\SS_\LBA(f_2,g_2)}_{\HH^0} 
			= \biginn{\SS_\KAB\big(\SS_\LBA(f_1,g_1)\big)}{\SS_\LBA(f_2,g_2)}_{\HH^0} \\
		&\quad = \biginn{\SS_\LBA(f_1,g_1)}{\SS_\KAB\big(\SS_\LBA(f_2,g_2)\big)}_{\HH^0} 
			= \biginn{\SS_\LBA(f_1,g_1)}{(\phi_2,\psi_2)}_{\HH^0} \\
		&\quad = \biginn{\SS_\LBA(f_1,g_1)}{\SS_\KAB(\phi_2,\psi_2)}_\LB
			= \biginn{(f_1,g_1)}{\FF_\KALB(f_2,g_2)}_\LBAS\;.
	\end{align*}
	This proves that $\FF_\KALB$ is self-adjoint with respect to the inner product $\inn{\cdot}{\cdot}_\LBAS$ on $\VV^\mo_\B$. 
	
	Thus, the proof is complete.
\end{proof}

\section{A fourth-order eigenvalue problem}

For $K,L\ge 0$, $\alpha,\beta\in\R$ with $\A\B\abso+\absg\neq 0$ and $\lambda\in\R$, we now consider the following fourth-order eigenvalue problem with bulk-surface coupling of Robin/Dirichlet type:
\begin{subequations}
	\label{EIG}
	\begin{alignat}{3}
	\label{EIG:1}
	\Lap^2 \phi &= \lambda \phi 
	&&\quad\text{in}\;\Omega, \\
	\label{EIG:2}
	\Lapg^2 \psi - \alpha\Lapg\deln \phi - \beta\deln\Lap\phi &= \lambda \psi 
	&&\quad\text{on}\;\Gamma,\\
	\label{EIG:3}
	K\, \deln \phi &= \alpha\psi - \phi, 
	&&\quad\text{on}\;\Gamma,\\
	\label{EIG:4}
	L\, \deln \Lap \phi &= \beta\Lapg \psi - \Lap \phi - \alpha\beta\deln \phi 
	&&\quad\text{on}\;\Gamma.
	\end{alignat}
\end{subequations}

The existence of at least one weak solution in $\WW^1_\KAB$ is trivial, as the pair of null functions $(\phi,\psi)=(0,0) \in \WW^1_\KAB$ obviously solves the equations. In general, weak solutions of \eqref{EIG} are not unique, and we are of course interested in nontrivial solutions. The following proposition provides some important properties of weak solutions.

\begin{proposition} \label{PROP:EIG}
	Let $K,L\ge 0$, $\alpha,\beta\in\R$ with $\A\B\abso+\absg\neq 0$ and $\lambda\in\R$ be arbitrary, and let $\Omega\subset \R^d$  be a bounded Lipschitz domain. Then the following holds:
	\begin{enumerate}[label = $\mathrm{(\alph*)}$, leftmargin = *]
		\item Let $(\phi,\psi)\in \WW^1_\KAB$ be any weak solution of \eqref{EIG} in the sense of Definition~\ref{DEF:WS:GEN}. Then $(\phi,\psi)$ satisfies the relation
		\begin{align}
		\label{REL:EIG}
			(\phi,\psi) = \FF_\KALB(\lambda\phi,\lambda\psi)
			\quad\text{a.e. in $\Omega$,}
		\end{align}
		as well as the identity
		\begin{align}
		\label{ID:EIG}
			\norm{(\phi,\psi)}_\KA = \lambda \norm{\SS_\LBA(\phi,\psi)}_{L,\beta}.
		\end{align}
		\item Suppose that $\Omega$ is of class $C^{k+2}$ for any $k\in\{0,1\}$, 
		and let $(\phi,\psi)$ be any weak solution of \eqref{EIG}. 
		Then it holds that $(\phi,\psi)\in\WW^{k+2}_\KAB$ with
		\begin{align*}
		\norm{(\phi,\psi)}_{\HH^{k+2}} \le C \lambda\, \norm{(\phi,\psi)}_{(\HH^1_\KA)^*}\,,
		\end{align*}
		for a constant $C\ge 0$ depending only on $\Omega$, $K$, $L$, $\A$, $\B$ and $k$.
		\item Suppose that $\Omega$ is of class $C^{k+4}$ for any $k\in\N_0$, 
		and let $(\phi,\psi)$ be any weak solution of \eqref{EIG}. 
		Then it holds that $(\phi,\psi)\in\WW^{k+2}_\KAB$ with
		\begin{align*}
		\norm{(\phi,\psi)}_{\HH^{k+4}} \le C \lambda\, \norm{(\phi,\psi)}_{\HH^k}.
		\end{align*}
		for a constant $C\ge 0$ depending only on $\Omega$, $K$, $L$, $\A$, $\B$ and $k$.
		\item Suppose that $\Omega$ is of class $C^\infty$, 
		and let $(\phi,\psi)$ be any weak solution of \eqref{EIG}. 
		Then it holds that $(\phi,\psi)\in \CC^\infty$.
	\end{enumerate}
\end{proposition}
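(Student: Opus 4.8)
The plan is to reduce everything to the second-order theory already established, specifically to Theorem~\ref{THM:SEC} and its consequences, via the decomposition $\FF_\KALB = \SS_\KAB\circ\SS_\LBA$ from Theorem~\ref{THM:GEN}(a). For part~(a), I would start from the weak formulation in Definition~\ref{DEF:WS:GEN}: if $(\phi,\psi)\in\WW^1_\KAB$ is a weak solution of \eqref{EIG}, then by definition it solves \eqref{WF:GEN} with the source term $(f,g)$ replaced by $(\lambda\phi,\lambda\psi)$. One must first check that $(\lambda\phi,\lambda\psi)\in\VV^\mo_\B$, which is immediate since $(\phi,\psi)\in\WW^1_\KAB\subset\VV^1_\B\subset\VV^\mo_\B$ and $\VV^\mo_\B$ is a linear space. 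Then, since $\FF_\KALB(f,g)$ is by Theorem~\ref{THM:GEN}(a) the \emph{unique} weak solution in $\WW^1_\KAB$ to a given source term, and $(\phi,\psi)$ is such a solution for the source $(\lambda\phi,\lambda\psi)$, uniqueness forces $(\phi,\psi) = \FF_\KALB(\lambda\phi,\lambda\psi)$, which is \eqref{REL:EIG}. The identity \eqref{ID:EIG} follows by testing \eqref{WF:GEN} (with source $(\lambda\phi,\lambda\psi)$) against $(\zeta,\xi) = (\phi,\psi)\in\HH^1_\KA$: the left-hand side becomes $\norm{(\phi,\psi)}_\KA^2$, the right-hand side becomes $\lambda\bigl(\SS_\LBA(\phi,\psi),(\phi,\psi)\bigr)_{\HH^0}$; using that $\SS_\LBA$ is characterized by $\bigl(\SS_\LBA(\phi,\psi),(\zeta,\xi)\bigr)_\LB = \bigl((\phi,\psi),(\zeta,\xi)\bigr)_{\HH^0}$ and the self-adjointness of $\SS^0_\LBA$ (Corollary-type argument as in Corollary~\ref{COR:SOL}(c)), one rewrites $\bigl(\SS_\LBA(\phi,\psi),(\phi,\psi)\bigr)_{\HH^0} = \norm{\SS_\LBA(\phi,\psi)}_\LB^2$, and dividing by $\norm{(\phi,\psi)}_\KA$ (nonzero for a nontrivial solution, cf.\ the analogue of Corollary~\ref{COR:SPOS}) gives the stated square-root identity.

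For parts~(b), (c), (d), the relation \eqref{REL:EIG} is the crucial bootstrap device: it says $(\phi,\psi) = \FF_\KALB(\lambda\phi,\lambda\psi)$, so the regularity of $(\phi,\psi)$ is governed by the smoothing properties of $\FF_\KALB$ applied to a source term that has the same regularity as $(\phi,\psi)$ itself. For part~(b), starting from $(\phi,\psi)\in\WW^1_\KAB$ we have $(\lambda\phi,\lambda\psi)\in\VV^\mo_\B$, so Theorem~\ref{THM:GEN}(b) gives $(\phi,\psi) = \FF_\KALB(\lambda\phi,\lambda\psi)\in\WW^{k+2}_\KAB$ for $k\in\{0,1\}$ together with the estimate $\norm{(\phi,\psi)}_{\HH^{k+2}}\le C\norm{(\lambda\phi,\lambda\psi)}_{(\HH^1_\KA)^*} = C\lambda\norm{(\phi,\psi)}_{(\HH^1_\KA)^*}$, which is exactly the claimed bound. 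For part~(c), one runs an induction on $k$: at $k=0$, part~(b) already gives $(\phi,\psi)\in\WW^2_\KAB\subset\VV^0_\B$, and if $\Omega\in C^{k+4}$ then Theorem~\ref{THM:GEN}(c) applied to the source $(\lambda\phi,\lambda\psi)\in\VV^k_\B$ yields $(\phi,\psi)\in\WW^{k+4}_\KAB$ with $\norm{(\phi,\psi)}_{\HH^{k+4}}\le C\lambda\norm{(\phi,\psi)}_{\HH^k}$; the inductive step uses that $\WW^{k+4}_\KAB\subset\VV^{k+1}_\B$ to feed into the next level. (I note the statement of (c) says $\WW^{k+2}_\KAB$ where $\WW^{k+4}_\KAB$ is presumably meant; the estimate makes clear the intended claim.) Part~(d) is then immediate: iterate (c) over all $k\in\N_0$ to get $(\phi,\psi)\in\WW^m_\KAB$ for every $m$, and Sobolev embedding $\bigcap_m H^m = C^\infty$ on $\Omega$ (of class $C^\infty$) and on $\Gamma$ gives $(\phi,\psi)\in\CC^\infty$.

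The main obstacle — or rather the only point requiring genuine care — is the algebraic manipulation proving \eqref{ID:EIG}, namely correctly threading the two different inner products $\inn{\cdot}{\cdot}_\KA$ and $\inn{\cdot}{\cdot}_\LB$ together with the defining variational identity and self-adjointness of $\SS_\LBA$ so that the right-hand side collapses to $\lambda\norm{\SS_\LBA(\phi,\psi)}_\LB^2$ rather than a mixed pairing; one must be mindful that $\SS_\LBA$ here is applied to $(\phi,\psi)\in\VV^0_\B\subset\VV^\mo_\A$ (the domain condition $\A\B\abso+\absg\neq 0$ guarantees this is well-defined). Everything else is a mechanical transfer of the already-proven second-order results through the composition $\FF_\KALB=\SS_\KAB\circ\SS_\LBA$, so I expect parts (b)–(d) to be essentially bookkeeping once (a) and in particular \eqref{REL:EIG} are in hand.
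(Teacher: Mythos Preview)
Your approach is essentially identical to the paper's: establish \eqref{REL:EIG} by uniqueness in Theorem~\ref{THM:GEN}(a), derive \eqref{ID:EIG} by testing the weak formulation with $(\phi,\psi)$ and invoking the defining variational identity of $\SS_\LBA$, and read off (b)--(d) directly from Theorem~\ref{THM:GEN}(b)--(d) applied to the source $(\lambda\phi,\lambda\psi)$.

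Two small corrections. First, the ``dividing by $\norm{(\phi,\psi)}_\KA$'' step is an algebraic slip: from $\norm{(\phi,\psi)}_\KA^2 = \lambda\,\norm{\SS_\LBA(\phi,\psi)}_\LB^2$ one does \emph{not} obtain the unsquared identity as stated in \eqref{ID:EIG}; rather, the statement itself is missing squares (the paper's own proof also stops at the squared version). Second, the domain of $\SS_\LBA$ is $\VV^\mo_\B$, not $\VV^\mo_\A$ as you wrote near the end; since $(\phi,\psi)\in\WW^1_\KAB\subset\VV^1_\B\subset\VV^\mo_\B$, the application is indeed well-defined.
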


\medskip

\begin{proof}
	Let $(\phi,\psi)\in\WW^1_\KAB$ be an arbitrary weak solution of the system \eqref{EIG}, and let us fix $(f,g):=(\lambda \phi,\lambda \psi)\in \VV^\mo_\B$. This means that $(\phi,\psi)$ is a weak solution of the system \eqref{GEN} to the source terms $(f,g)$. 
	According to Theorem~\ref{THM:GEN}(a), $\FF_\KALB\big(f,g\big)$ is the unique weak solution of the problem \eqref{GEN} in the space $\WW^1_\KAB$ to the source terms $(f,g)$. 
	We thus infer that
	\begin{align*}
	\FF_\KALB\big(\lambda \phi,\lambda \psi\big) = \FF_\KALB\big(f,g\big) = (\phi,\psi) \quad \text{a.e. in $\Omega$}, 
	\end{align*}
	which verifies \eqref{REL:EIG}. This means that the theory developed in Theorem~\ref{THM:GEN} is applicable for the weak solution $(\phi,\psi)$.
	Testing the weak formulation \eqref{WF:GEN} written for $(f,g)=(\lambda \phi,\lambda \psi)$ with $(\zeta,\xi)=(\phi,\psi)$ and recalling the definition of the solution operator $\SS_\LBA$, we conclude that
	\begin{align*}
		\biginn{(\phi,\psi)}{(\phi,\psi)}_\KA &= \lambda \biginn{\SS_\LBA(\phi,\psi)}{(\phi,\psi)}_{\HH^0} \\
		& = \lambda \biginn{\SS_\LBA(\phi,\psi)}{\SS_\LBA(\phi,\psi)}_\LB,
	\end{align*}
	which proves \eqref{ID:EIG}.
	Moreover, the regularity assertions in (b), (c) and (d) are direct consequences of Theorem~\ref{THM:GEN}(b), (c) and (d), respectively.
\end{proof}

An eigenvalue of \eqref{EIG} and its corresponding eigenfunctions are defined as follows: 

\begin{definition}
	\label{DEF:EIG}
	Let $K,L\ge 0$, $\alpha,\beta\in\R$ with $\A\B\abso+\absg\neq 0$ and $\lambda\in\R$ be arbitrary, and let $\Omega\subset \R^d$  be a bounded Lipschitz domain.
	
	We call $\lambda\in\R$ an \emph{eigenvalue} if the system \eqref{EIG} possesses at least one nontrivial weak solution $(\phi,\psi)\in\WW^1_\KAB$. 
	In this case, the pair $(\phi,\psi)$ is referred to as an \emph{eigenfunction} to the eigenvalue $\lambda$.
\end{definition}

\medskip

We immediately observe that eigenvalues must be strictly positive.

\begin{corollary}
	\label{COR:POS}
	Let $K,L\ge 0$, $\alpha,\beta\in\R$ with $\A\B\abso+\absg\neq 0$ and $\lambda\in\R$ be arbitrary, let $\Omega\subset \R^d$  be a bounded Lipschitz domain,
	and let $\lambda\in\R$ be an eigenvalue. Then it holds that $\lambda>0$.
\end{corollary}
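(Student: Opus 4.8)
The plan is to adapt, essentially verbatim, the argument used for the second-order eigenvalue problem in Corollary~\ref{COR:SPOS}, with the quadratic identity there replaced by its fourth-order analogue from Proposition~\ref{PROP:EIG}(a).

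First I would let $(\phi,\psi)\in\WW^1_\KAB$ be an eigenfunction to the eigenvalue $\lambda$; by Definition~\ref{DEF:EIG} this pair is a \emph{nontrivial} weak solution of \eqref{EIG}. Since $\WW^1_\KAB\subset\VV^\mo_\B$, which is the domain of $\SS_\LBA$, the expression $\SS_\LBA(\phi,\psi)$ is well-defined, and Proposition~\ref{PROP:EIG}(a) yields the identity~\eqref{ID:EIG},
\begin{align*}
	\norm{(\phi,\psi)}_\KA = \lambda\,\norm{\SS_\LBA(\phi,\psi)}_\LB.
\end{align*}
Next I would argue by contradiction and suppose $\lambda\le 0$. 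As $\norm{\,\cdot\,}_\LB$ is a norm, the right-hand side of the displayed identity is nonpositive, while the left-hand side is nonnegative; hence $\norm{(\phi,\psi)}_\KA = 0$. Because $\norm{\,\cdot\,}_\KA$ is a genuine norm on $\WW^1_\KAB$ — in fact equivalent to $\norm{\,\cdot\,}_{\HH^1}$ by Corollary~\ref{COR:EQU} — this forces $(\phi,\psi)=(0,0)$, contradicting the nontriviality of the eigenfunction. Therefore $\lambda>0$.

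I do not expect any genuine obstacle: the statement is a direct consequence of the positive definiteness of the inner product $\inn{\cdot}{\cdot}_\KA$ on $\WW^1_\KAB$ combined with the quadratic identity~\eqref{ID:EIG}. The only subtlety worth keeping in mind is that one must invoke the norm equivalence on the subspace $\WW^1_\KAB$ (Corollary~\ref{COR:EQU}) — rather than any positive definiteness of $\inn{\cdot}{\cdot}_\KA$ regarded merely as a bilinear form on all of $\HH^1$, which it does not enjoy — in order to pass from $\norm{(\phi,\psi)}_\KA=0$ to $(\phi,\psi)=(0,0)$.
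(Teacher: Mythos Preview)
Your proof is correct and follows essentially the same route as the paper's own argument: assume $\lambda\le 0$, invoke the identity~\eqref{ID:EIG} from Proposition~\ref{PROP:EIG}(a) to force $\norm{(\phi,\psi)}_\KA=0$, and contradict nontriviality. The only difference is cosmetic --- you spell out explicitly the appeal to Corollary~\ref{COR:EQU} for positive definiteness on $\WW^1_\KAB$, which the paper leaves implicit.
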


\begin{proof}
	We argue by contradiction and assume that $\lambda \le 0$. Let $(\phi,\psi)$ be a corresponding eigenfunction. It then follows from \eqref{ID:EIG} that 
	\begin{align*}
		\norm{(\phi,\psi)}_\KA^2 = 0
	\end{align*}
	which directly yields $(\phi,\psi)=(0,0)$. Since eigenfunctions are nontrivial this is a contradiction and thus, the assertion is established.
\end{proof}

\bigskip

The eigenvalues of the problem \eqref{EIG} and their corresponding eigenfunctions can be characterized as follows:

\begin{theorem}
	\label{THM:EIG}
	Let $K,L\ge 0$ and $\alpha,\beta\in\R$ with $\A\B\abso+\absg\neq 0$ be arbitrary, and let $\Omega\subset \R^d$  be a bounded Lipschitz domain. Then the following holds:
	\begin{enumerate}[label = $\mathrm{(\alph*)}$, leftmargin = *]	
		\item The problem \eqref{EIG} has countably many eigenvalues and each of them has a finite-dimensional eigenspace. Repeating each eigenvalue according to its multiplicity, we can write them as a sequence $(\lambda_k)_{k\in\N} \subset \R$ with 
		\begin{align*}
		0 < \lambda_1 \le \lambda_2 \le \lambda_3 \le ... 
		\qquad\text{and}\qquad
		\lambda_k \to \infty \quad\text{as}\; k\to \infty. 
		\end{align*}
		\item There exists an orthonormal basis $\big((\phi_k,\psi_k)\big)_{k\in\N}$ of $\VV_\beta^\mo$ with respect to the inner product $\inn{\cdot}{\cdot}_\LBAS$ where for each $k\in\N$, the pair $(\phi_k,\psi_k)$ is an eigenfunction to the eigenvalue $\lambda_k$.\\[1ex]
		In particular, any pair $(\phi,\psi)\in\VV_\beta^\mo$ can be expressed as
		\begin{align*}
		 	(\phi,\psi) = \sum_{k=1}^\infty c_k\, (\phi_k,\psi_k)
		 	\quad\text{with}\quad
		 	c_k:=\biginn{(\phi,\psi)}{(\phi_k,\psi_k)}_\LBAS, \;\; k\in\N.
		\end{align*}
	\end{enumerate}
\end{theorem}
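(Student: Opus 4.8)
The plan is to apply the spectral theorem for compact self-adjoint operators to the solution operator $\FF_\KALB$, exactly as Theorem~\ref{THM:SEIG} was deduced from Corollary~\ref{COR:SOL}. By Corollary~\ref{COR:GEN}, the operator $\FF_\KALB\colon \VV^\mo_\B \to \VV^\mo_\B$ is linear, continuous, compact, injective, and self-adjoint with respect to the inner product $\inn{\cdot}{\cdot}_\LBAS$. Recall from Corollary~\ref{COR:IP} (applied with the roles of $K,\alpha$ played by $L,\beta$) that $\big(\VV^\mo_\B,\inn{\cdot}{\cdot}_\LBAS,\norm{\,\cdot\,}_\LBAS\big)$ is a Hilbert space. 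Hence the spectral theorem for compact self-adjoint operators on a Hilbert space (see, e.g., \cite[s.\,12.12]{Alt}) yields a sequence of real eigenvalues $(\kappa_k)_{k\in\N}$ of $\FF_\KALB$, each of finite multiplicity, accumulating only at $0$, together with an associated orthonormal basis $\big((\phi_k,\psi_k)\big)_{k\in\N}$ of $\VV^\mo_\B$ consisting of eigenvectors of $\FF_\KALB$. Injectivity guarantees $\kappa_k\neq 0$ for all $k$, so $\FF_\KALB$ has no kernel to factor out and the eigenvectors indeed span the whole space $\VV^\mo_\B$.

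Next I would translate the eigenvalue equation $\FF_\KALB(\phi_k,\psi_k) = \kappa_k(\phi_k,\psi_k)$ back into the eigenvalue problem \eqref{EIG}. Setting $\lambda_k := \kappa_k^\mo$, linearity of $\FF_\KALB$ gives $\FF_\KALB(\lambda_k \phi_k, \lambda_k \psi_k) = (\phi_k,\psi_k)$, which by Definition~\ref{DEF:WS:GEN} and Theorem~\ref{THM:GEN}(a) means precisely that $(\phi_k,\psi_k)$ is a nontrivial weak solution of \eqref{EIG} for the value $\lambda_k$; that is, $\lambda_k$ is an eigenvalue of \eqref{EIG} with eigenfunction $(\phi_k,\psi_k)$ in the sense of Definition~\ref{DEF:EIG}. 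Conversely, by Proposition~\ref{PROP:EIG}(a) any weak solution $(\phi,\psi)$ of \eqref{EIG} to some $\lambda$ satisfies $(\phi,\psi)=\FF_\KALB(\lambda\phi,\lambda\psi)$, so $\lambda^\mo$ is an eigenvalue of $\FF_\KALB$ provided $\lambda\neq 0$; and Corollary~\ref{COR:POS} already rules out $\lambda\le 0$. This establishes a bijective correspondence between nonzero eigenvalues of $\FF_\KALB$ and eigenvalues of \eqref{EIG} via $\lambda = \kappa^\mo$, matching eigenspaces. Since $\kappa_k\to 0$ and $\kappa_k>0$ (again by Corollary~\ref{COR:POS}, since each $\kappa_k^\mo$ is an eigenvalue of \eqref{EIG}), reordering so that the $\kappa_k$ are nonincreasing makes the $\lambda_k$ nondecreasing with $\lambda_k\to\infty$, and each has finite-dimensional eigenspace. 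This proves (a), and the expansion in (b) is just the orthonormal-basis statement combined with the identification of the $(\phi_k,\psi_k)$ as eigenfunctions.

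The only genuinely delicate point is making sure the correspondence between eigenvalues of the operator and eigenvalues of the PDE system is both well-defined and exhaustive: one must check that every eigenvalue of $\FF_\KALB$ is nonzero (which is injectivity, Corollary~\ref{COR:GEN}(b)), that no eigenvalue of \eqref{EIG} is zero (Corollary~\ref{COR:POS}), and that the eigenfunctions produced abstractly in $\VV^\mo_\B$ are legitimate weak solutions in $\WW^1_\KAB$ — but this last regularity upgrade is exactly the content of Proposition~\ref{PROP:EIG}(a) together with $\FF_\KALB(\VV^\mo_\B)\subset\WW^1_\KAB$ from Theorem~\ref{THM:GEN}. Everything else is a direct quotation of the spectral theorem, so I do not expect any real obstacle beyond bookkeeping. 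Hence the proof is essentially identical in structure to that of Theorem~\ref{THM:SEIG}.
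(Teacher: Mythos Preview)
Your proposal is correct and follows exactly the same approach as the paper: apply the spectral theorem for compact self-adjoint operators to $\FF_\KALB$ on the Hilbert space $\big(\VV^\mo_\B,\inn{\cdot}{\cdot}_\LBAS\big)$ using the properties from Corollary~\ref{COR:GEN}, and invoke Corollary~\ref{COR:POS} for strict positivity of the eigenvalues. The paper's proof is essentially a one-line citation of these ingredients, while you have merely spelled out the standard bookkeeping (the $\lambda=\kappa^{-1}$ correspondence and the fact that eigenvectors of $\FF_\KALB$ automatically lie in $\WW^1_\KAB$) in more detail.
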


\begin{proof}
	We recall the solution operator $\FF_\KALB:\VV_\beta^\mo\to\VV_\beta^\mo$ to the problem \eqref{GEN} for source terms in $\VV_\beta^\mo$. Due to its properties established in Corollary~\ref{COR:GEN}, the spectral theorem for compact normal operators (see, e.g., \cite[s.\,12.12]{Alt}) can be applied and proves all assertions. We point out that the sequence of eigenvalues is strictly positive according to Corollary~\ref{COR:POS}.
\end{proof}

\medskip

Furthermore, the eigenvalues and the corresponding eigenfunctions can be characterized by the following variational principle.

\begin{proposition}
	Let $K,L\ge 0$ and $\alpha,\beta\in\R$ with $\A\B\abso+\absg\neq 0$ be arbitrary and let $\Omega\subset \R^d$  be a bounded Lipschitz domain. 
	Moreover, let $(\lambda_k)_{k\in\N}$ and $\big((\phi_k,\psi_k)\big)_{k\in\N}$ denote the sequences from Theorem~\ref{THM:EIG}.
	For any $k\in\N$, let $S_{k-1}$ denote the collection of all $(k-1)$-dimensional linear subspaces of $\VV_\beta^\mo$.

	Then, for any $k\in\N$, the eigenvalue $\lambda_k$ can be represented by the variational principle
	\begin{align*}
		\lambda_k 
		= \underset{V\in S_{k-1}}{\max} 
		\underset{\substack{(\zeta,\xi)\in V^\bot,\\ \norm{(\zeta,\xi)}_\LBAS=1}}{\min}\; \norm{\FF_\KALB(\zeta,\xi)}_\KA^2
	\end{align*}	
\end{proposition}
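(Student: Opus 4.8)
The plan is to reduce the assertion to the Courant--Fischer minimax principle for a compact, self-adjoint and positive operator, just as in the second-order case, the only new ingredient being an identity that rewrites the functional $\norm{\FF_\KALB(\zeta,\xi)}_\KA^2$ as the quadratic form of $\FF_\KALB$ with respect to the inner product $\inn{\cdot}{\cdot}_\LBAS$.

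The decisive step is to prove that
\begin{align*}
	\norm{\FF_\KALB(\zeta,\xi)}_\KA^2 = \biginn{\FF_\KALB(\zeta,\xi)}{(\zeta,\xi)}_\LBAS
	\qquad\text{for all }(\zeta,\xi)\in\VV^\mo_\B.
\end{align*}
Writing $(\phi,\psi):=\FF_\KALB(\zeta,\xi)\in\WW^1_\KAB$ and $(\mu,\nu):=\SS_\LBA(\zeta,\xi)\in\WW^1_\LBA$, I would first test the weak formulation \eqref{WF:GEN} for the source term $(\zeta,\xi)$ with the test function $(\phi,\psi)$, obtaining $\norm{(\phi,\psi)}_\KA^2=\biginn{(\mu,\nu)}{(\phi,\psi)}_{\HH^0}$. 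Next, since $(\phi,\psi)\in\WW^1_\KAB\subset\VV^\mo_\B$, the pair $\SS_\LBA(\phi,\psi)$ is well defined and, by the weak formulation of the second-order system it solves, satisfies $\biginn{\SS_\LBA(\phi,\psi)}{(\zeta',\xi')}_\LB=\biginn{(\phi,\psi)}{(\zeta',\xi')}_{\HH^0}$ for all $(\zeta',\xi')\in\HH^1_\LB$; I would then insert the admissible test function $(\zeta',\xi')=(\mu,\nu)=\SS_\LBA(\zeta,\xi)\in\WW^1_\LBA\subset\HH^1_\LB$ to get $\biginn{\SS_\LBA(\phi,\psi)}{(\mu,\nu)}_\LB=\biginn{(\phi,\psi)}{(\mu,\nu)}_{\HH^0}$. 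Combining the two identities and recalling the definition $\biginn{(a,b)}{(c,d)}_\LBAS:=\biginn{\SS_\LBA(a,b)}{\SS_\LBA(c,d)}_\LB$ (cf.\ Corollary~\ref{COR:IP}) proves the identity above. In particular, together with the injectivity of $\FF_\KALB$ from Corollary~\ref{COR:GEN}, it shows that $\FF_\KALB$ is positive definite on $\VV^\mo_\B$ with respect to $\inn{\cdot}{\cdot}_\LBAS$.

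It then remains to invoke abstract spectral theory. By Corollary~\ref{COR:GEN}, $\FF_\KALB$ is a linear, compact and injective operator on the Hilbert space $\big(\VV^\mo_\B,\inn{\cdot}{\cdot}_\LBAS\big)$ that is self-adjoint with respect to $\inn{\cdot}{\cdot}_\LBAS$, and by the relation $(\phi,\psi)=\FF_\KALB(\lambda\phi,\lambda\psi)=\lambda\,\FF_\KALB(\phi,\psi)$ from Proposition~\ref{PROP:EIG}(a) together with Theorem~\ref{THM:EIG}, its eigenvalues are precisely the numbers $1/\lambda_k$, with the $\inn{\cdot}{\cdot}_\LBAS$-orthonormal eigenbasis $\big((\phi_k,\psi_k)\big)_{k\in\N}$. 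Applying the minimax principle for self-adjoint operators (see, e.g., \cite[Thm.~6.1.2]{Blanchard-Bruning}) to $\FF_\KALB$ and to its quadratic form $(\zeta,\xi)\mapsto\biginn{\FF_\KALB(\zeta,\xi)}{(\zeta,\xi)}_\LBAS=\norm{\FF_\KALB(\zeta,\xi)}_\KA^2$ then produces the stated variational characterization of $\lambda_k$.

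Almost everything here is routine bookkeeping with the four inner products $\inn{\cdot}{\cdot}_\KA$, $\inn{\cdot}{\cdot}_\LB$, $\inn{\cdot}{\cdot}_{\HH^0}$ and $\inn{\cdot}{\cdot}_\LBAS$; the one step that genuinely requires an argument is the identity in the second paragraph, and inside it the two admissibility checks, namely that $\SS_\LBA$ may be applied to $(\phi,\psi)$ (since $(\phi,\psi)\in\WW^1_\KAB\subset\VV^\mo_\B$) and that $\SS_\LBA(\zeta,\xi)$ is a legitimate test function for the equation solved by $\SS_\LBA(\phi,\psi)$ (since $\SS_\LBA(\zeta,\xi)\in\WW^1_\LBA\subset\HH^1_\LB$). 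I do not expect any further obstacle; the remainder is a direct transcription of the second-order argument.
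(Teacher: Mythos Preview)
Your approach is essentially the same as the paper's: both invoke the minimax principle for self-adjoint operators (the paper cites \cite[Thm.~6.1.2]{Blanchard-Bruning} and says nothing more). You simply make explicit the quadratic-form identity $\norm{\FF_\KALB(\zeta,\xi)}_\KA^2=\biginn{\FF_\KALB(\zeta,\xi)}{(\zeta,\xi)}_\LBAS$ that the paper leaves to the reader, and your verification of it is correct.
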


The claim follows directly from the minimax principle for self-adjoint operators (see, e.g., \cite[Thm.~6.1.2]{Blanchard-Bruning}).

\appendix
\section{Appendix}

We present a Poincar\'e type inequality with respect to the norm $\norm{\,\cdot\,}_\KA$ for functions in $\WW^1_\KAB$.

\begin{lemma}
\label{LEM:POIN}
	Let $K\ge 0$ and $\alpha,\beta\in\R$ with $\alpha\beta\abso+\absg\neq 0$ be arbitrary, and let $\Omega\subset\R^d$ be a bounded Lipschitz domain $\Omega$ with boundary $\Gamma$. Then there exists a constant $c_P > 0$ depending only on $K$, $\alpha$, $\beta$ and $\Omega$ such that 
\begin{align}
	\norm{(u,v)}_{\HH^0} \le c_P \norm{(u,v)}_\KA
\end{align}
for all pairs $(u,v)\in\WW^1_\KAB$.
\end{lemma}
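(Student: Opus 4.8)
The inequality is a Poincaré–Wirtinger type estimate: we must control the $L^2$-norm of a pair $(u,v)$ by the (semi)norm $\norm{\cdot}_\KA$, which only controls the gradients $\Grad u$ in $\Omega$ and $\Gradg v$ on $\Gamma$, plus (when $K>0$) the boundary term $\sigma(K)\intG(\A v-u)^2\dS$, on the subspace $\WW^1_\KAB$ pinned down by the constraint $\B\abso\meano{u}+\absg\meang{v}=0$. The natural strategy is a compactness–contradiction argument à la Ehrling. I would suppose the estimate fails, so there is a sequence $(u_n,v_n)\in\WW^1_\KAB$ with $\norm{(u_n,v_n)}_{\HH^0}=1$ and $\norm{(u_n,v_n)}_\KA\to 0$.

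\textbf{Step 1: uniform $H^1$-bound and weak limit.} From $\norm{(u_n,v_n)}_\KA\to 0$ we get $\Grad u_n\to 0$ in $L^2(\Omega)$ and $\Gradg v_n\to 0$ in $L^2(\Gamma)$; combined with $\norm{u_n}_{L^2(\Omega)},\norm{v_n}_{L^2(\Gamma)}\le 1$ this yields a uniform bound for $(u_n,v_n)$ in $\HH^1$. Passing to a subsequence, $(u_n,v_n)\wto (u_*,v_*)$ in $\HH^1$, and by compactness of the embedding $\HH^1\emb\HH^0$ (Rellich in $\Omega$ and on the compact manifold $\Gamma$) we have $(u_n,v_n)\to(u_*,v_*)$ strongly in $\HH^0$, so $\norm{(u_*,v_*)}_{\HH^0}=1$. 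Weak lower semicontinuity gives $\Grad u_*=0$ a.e. in $\Omega$ and $\Gradg v_*=0$ a.e. on $\Gamma$, hence $u_*$ is constant on each connected component of $\Omega$ and $v_*$ is constant on each component of $\Gamma$ (here one uses connectedness of $\Omega$, which is implicit in "bounded Lipschitz domain"; a brief remark suffices). So $u_*\equiv a$ and $v_*\equiv b$ for constants $a,b$.

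\textbf{Step 2: identify the constants using the coupling and the mean constraint.} I would split into the two cases. If $K>0$: the term $\sigma(K)\intG(\A v_n-u_n)^2\dS\to 0$, and by the trace theorem $u_n|_\Gamma\to a$ in $L^2(\Gamma)$, so $\intG(\A b-a)^2\dS=0$, giving $a=\A b$. If $K=0$: then $(u_n,v_n)\in\HH^1_\KA$ means $u_n|_\Gamma=\A v_n$ a.e. on $\Gamma$, and passing to the limit in $L^2(\Gamma)$ again gives $a=\A b$. In both cases the closedness of $\WW^1_\KAB$ (it is a closed subspace, as noted in the preliminaries) or simply passing to the limit in the linear constraint gives $\B\abso\, a+\absg\, b=0$, i.e. $\B\abso\A b+\absg b=(\A\B\abso+\absg)b=0$. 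Since $\A\B\abso+\absg\neq 0$ by hypothesis, $b=0$, hence $a=\A b=0$, so $(u_*,v_*)=(0,0)$, contradicting $\norm{(u_*,v_*)}_{\HH^0}=1$. This contradiction proves the existence of $c_P$.

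\textbf{Main obstacle.} The only genuinely delicate point is Step 2 in the Robin case $K>0$: one must pass to the limit in the boundary integral $\intG(\A v_n-u_n)(\A v_n-u_n)\dS$, which requires strong $L^2(\Gamma)$-convergence of the traces $u_n|_\Gamma$, not just of $u_n$ in $L^2(\Omega)$. This follows from the compactness of the trace map $H^1(\Omega)\emb L^2(\Gamma)$ (indeed $H^1(\Omega)\emb H^{1/2}(\Gamma)\emb\emb L^2(\Gamma)$), so the bounded sequence $(u_n)$ in $H^1(\Omega)$ has traces converging strongly in $L^2(\Gamma)$ along a further subsequence; I would invoke this explicitly. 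Everything else is routine. One should also note that the constant $c_P$ depends on $K,\A,\B,\Omega$ but not on the particular functions, which is exactly what the contradiction argument delivers since the failure was quantified uniformly.
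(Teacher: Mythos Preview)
Your proposal is correct and follows essentially the same contradiction/compactness argument as the paper: normalize a violating sequence, extract a weak $\HH^1$-limit, use Rellich to get strong $\HH^0$-convergence (hence unit norm of the limit), use weak lower semicontinuity to force the limit to be a pair of constants, and then combine the coupling condition $a=\alpha b$ with the mean constraint to reach $(0,0)$ and a contradiction. Your explicit invocation of trace compactness $H^1(\Omega)\hookrightarrow L^2(\Gamma)$ to pass to the limit in the boundary term when $K>0$ is in fact slightly more careful than the paper's presentation, but the underlying argument is the same.
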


\begin{proof}
	We prove the assertion by contradiction. Therefore, we assume that the estimate is false. Consequently, for every $k\in\N$ there exists a pair $(u_k,v_k)\in\WW^1_\KAB$  such that
\begin{align}
	\label{ASS:POIN}
	\norm{(u_k,v_k)}_{\HH^0} > k \norm{(u_k,v_k)}_\KA
\end{align}
Thus, the sequence $(\tilde u_k,\tilde v_k)_{k\in\N}$ defined by
\begin{align*}
	\tilde u_k := \frac{u_k}{\norm{(u_k,v_k)}_{\HH^0}},
	\qquad
	\tilde v_k := \frac{v_k}{\norm{(u_k,v_k)}_{\HH^0}},
	\qquad
	k\in\N
\end{align*}
satisfies 
\begin{align}
\label{PC1}
	(\tilde u_k,\tilde v_k) \in \WW^1_\KAB,
	\qquad
	\norm{(\tilde u_k,\tilde v_k)}_{\HH^0} = 1,
	\qquad
	k\in\N.
\end{align}	
Moreover, \eqref{ASS:POIN} implies that  
\begin{align}
\label{PC2}
	\norm{\Grad \tilde u_k}_{L^2(\Omega)}^2 + \norm{\Gradg \tilde v_k}_{L^2(\Gamma)}^2 
		+ \sigma(K) \norm{\alpha \tilde v_k-\tilde u_k}_{L^2(\Gamma)}^2
	= \norm{(\tilde u_k,\tilde v_k)}_\KA^2 < \frac 1 {k^2},
\end{align}
for all $k\in\N$. In particular, \eqref{PC1} and \eqref{PC2} imply that the sequence $(\tilde u_k, \tilde v_k)_{k\in\N}$ is bounded in $\HH^1$. Hence, according to the Banach-Alaoglu theorem, there exists a pair $(u,v)\in\HH^1$ such that $(\tilde u_k,\tilde v_k)\wto (u,v)$ in $\HH^1$ after extraction of a subsequence. 
It thus follows that
\begin{align*}
	\beta\abs{\Omega} \meano{u} + \abs{\Gamma}\meang{v} = 0.
\end{align*}

From the compact embedding $\HH^1\emb\HH^0$, we deduce that
\begin{align*}
	(\tilde u_k,\tilde v_k) \to (u,v) 
	\qquad
	\text{in}\; \HH^0,
\end{align*}
after another subsequence extraction. In particular, this implies that $\norm{(u,v)}_{\HH^0}= 1$.

If $K>0$, we infer from \eqref{PC2} that
\begin{align}
\label{PC3}
	\alpha \tilde v_k - \tilde u_k \to 0 \quad \text{in}\; L^2(\Gamma) 
	\qquad\text{and thus,}\qquad
	\alpha v - u = 0 \quad\text{a.e. on}\; \Gamma.
\end{align}
This obviously holds true for $K=0$, since then $\alpha \tilde v_k - \tilde u_k=0$ for all $k\in\N$.
Hence, we conclude that $(u,v)\in\WW^1_\KAB$.

As the $\HH^0$-norm is weakly lower semicontinuous, we deduce that
\begin{align*}
	\norm{(\Grad u,\Gradg v)}_{\HH^0}
	\le \underset{k\to\infty}{\lim\inf}\norm{(\Grad \tilde u_k,\Gradg \tilde v_k)}_{\HH^0} 
	\le 0.
\end{align*}
This implies that there exist constants $A,B\in\R$ such that $u=A$ almost everywhere in $\Omega$ and $v=B$ almost everywhere on $\Gamma$. 
It then follows from $(u,v)\in\WW^1_\KAB$ and \eqref{PC3} that
\begin{align*}
	\beta\abs{\Omega} A + \abs{\Gamma} B 
	= \beta\abs{\Omega} \meano{u} + \abs{\Gamma}\meang{v} = 0
	\quad\text{and}\quad
	\alpha B - A = 0.
\end{align*}
Since $\alpha\beta\abso+\absg\neq 0$, we conclude that $A=B=0$ which means that $u=0$ almost everywhere in $\Omega$ and $v=0$ almost everywhere on $\Gamma$. However, this is a contradiction to $\norm{(u,v)}_{\HH^0}= 1$. This proves the assertion.
\end{proof}

\medskip

The following result is a direct consequence of Lemma \ref{LEM:POIN}.

\begin{corollary}\label{COR:EQU}
	Let $K\ge 0$ and $\alpha,\beta\in\R$ with $\alpha\beta\abso+\absg\neq 0$ be arbitrary, and let $\Omega\subset\R^d$ be a bounded Lipschitz domain $\Omega$ with boundary $\Gamma$. Then there exist constants $A,B> 0$ depending only on $K$, $\alpha$, $\beta$ and $\Omega$ such that for all $(u,v)\in\WW^1_\KAB\,$,
	\begin{align}
		\norm{(u,v)}_{\HH^1} \le A \norm{(u,v)}_\KA
		\qquad\text{and}\qquad
		\norm{(u,v)}_\KA \le B \norm{(u,v)}_{\HH^1}.
	\end{align}
	This means that both norms are equivalent.
\end{corollary}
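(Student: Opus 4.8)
The goal is to prove Corollary~\ref{COR:EQU}, namely the equivalence of the norms $\norm{\,\cdot\,}_\KA$ and $\norm{\,\cdot\,}_{\HH^1}$ on $\WW^1_\KAB$. The plan is to derive both inequalities directly from the Poincar\'e-type estimate of Lemma~\ref{LEM:POIN}, which has already been established.

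For the upper bound $\norm{(u,v)}_\KA \le B \norm{(u,v)}_{\HH^1}$, I would simply expand the definition of $\norm{(u,v)}_\KA^2$ from \eqref{P:INNKA}: it consists of $\norm{\Grad u}_{L^2(\Omega)}^2 + \norm{\Gradg v}_{L^2(\Gamma)}^2$ plus $\sigma(K)\norm{\A v - u}_{L^2(\Gamma)}^2$. The first two terms are trivially bounded by $\norm{(u,v)}_{\HH^1}^2$. For the third term, when $K=0$ it vanishes (indeed on $\WW^1_\KAB$ one has $u\vert_\Gamma = \alpha v$, but even without that, $\sigma(0)=0$), and when $K>0$ one uses $\norm{\A v - u}_{L^2(\Gamma)}^2 \le C(\norm{v}_{L^2(\Gamma)}^2 + \norm{u}_{L^2(\Gamma)}^2)$ together with the continuity of the trace operator $H^1(\Omega)\emb L^2(\Gamma)$ (valid on a bounded Lipschitz domain) to bound $\norm{u}_{L^2(\Gamma)}$ by $\norm{u}_{H^1(\Omega)}$. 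Collecting constants gives $B$, depending only on $K$, $\A$ and $\Omega$ (the trace constant).

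For the lower bound $\norm{(u,v)}_{\HH^1} \le A \norm{(u,v)}_\KA$, I would write $\norm{(u,v)}_{\HH^1}^2 = \norm{(u,v)}_{\HH^0}^2 + \norm{\Grad u}_{L^2(\Omega)}^2 + \norm{\Gradg v}_{L^2(\Gamma)}^2$. The gradient terms are bounded above by $\norm{(u,v)}_\KA^2$ by definition (the $\sigma(K)$-term is nonnegative). The $\HH^0$-term is handled precisely by Lemma~\ref{LEM:POIN}: $\norm{(u,v)}_{\HH^0}^2 \le c_P^2 \norm{(u,v)}_\KA^2$. Adding these yields $\norm{(u,v)}_{\HH^1}^2 \le (1 + c_P^2)\norm{(u,v)}_\KA^2$, so $A = \sqrt{1 + c_P^2}$.

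There is no real obstacle here; the only point requiring a little care is that Lemma~\ref{LEM:POIN} must genuinely apply, i.e.\ the hypothesis $\A\B\abso + \absg \neq 0$ is in force (it is, by assumption in the corollary) and $(u,v)$ lies in $\WW^1_\KAB$ (which incorporates both the mean-value constraint from $\VV^1_\B$ and, when $K=0$, the Dirichlet trace constraint). Since all constants produced depend only on $K$, $\A$, $\B$ and $\Omega$ (through $c_P$ and the trace constant), the claimed dependence is respected, and the two inequalities together establish the norm equivalence.
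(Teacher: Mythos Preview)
Your proof is correct and matches the paper's approach: the paper states Corollary~\ref{COR:EQU} as ``a direct consequence of Lemma~\ref{LEM:POIN}'' without writing out a proof, and your argument is precisely the natural way to unpack that consequence (Poincar\'e for the $\HH^0$-part, the trace inequality for the boundary term in the $\KA$-norm). There is nothing to add.
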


\section*{Acknowledgements}
Patrik Knopf was partially supported by the RTG 2339 \glqq Interfaces,
Complex Structures, and Singular Limits\grqq\ of the German Science Foundation (DFG).
Chun Liu was partially supported by the grant 1759535 of the National Science Foundation (NSF) and the grant 2024246 of the United States–Israel Binational Science Foundation (BSF). The support is gratefully acknowledged.




\footnotesize

\bibliographystyle{plain}
\bibliography{KL}
%

\end{document}